\newcommand{\Wr }{\,{\rm Wr}\,}
\newcommand{\e}{\varepsilon}
\newcommand{\comp}{{\rm comp}}
\renewcommand{\d}{{\rm d}}
\newcommand{\wre}{\textrm{\,wr\,}}
\newcommand{\Wre}{\textrm{\,Wr\,}}
\newcommand{\diam}{{\rm diam}}
\newtheorem{thm}{Theorem}[section]
\newtheorem{cor}[thm]{Corollary}
\newtheorem{lem}[thm]{Lemma}
\newtheorem{prop}[thm]{Proposition}
\newtheorem{prob}[thm]{Problem}
\theoremstyle{definition}
\newtheorem{defn}[thm]{Definition}
\theoremstyle{remark}
\newtheorem{rem}[thm]{Remark}
\newtheorem{ex}[thm]{Example}
\renewcommand{\c}{{\rm c}}
\newcommand{\com}{{\rm comp}}
\newfont{\eufm}{eufm10}
\begin{document}

\title{A quasi-isometric embedding theorem for groups}

\author{A.Yu.  Olshanskii, D.V. Osin}

\date{}

\maketitle

\begin{abstract}
We show that every group $H$ of at most exponential growth with respect to some left invariant metric admits a bi-Lipschitz embedding into a finitely generated group $G$ such that $G$ is amenable (respectively, solvable, satisfies a non-trivial identity,  elementary amenable, of finite decomposition complexity, etc.) whenever $H$ is. We also discuss some applications to compression functions of Lipschitz embeddings into uniformly convex Banach spaces, F\o lner functions, and elementary classes of amenable groups.
\end{abstract}

\section{Introduction}

It is well known that every countable group can be embedded in a group generated by $2$ elements. This theorem was first proved by  Higman, B.H. Neumann, and H. Neumann in 1949 \cite{HNN} using HNN-extensions. Since then many alternative constructions have been found and the result has been strengthened in various ways. Most of the subsequent improvements were motivated by the desire to better control either the algebraic structure of the resulting finitely generated group or geometric properties of the embedding.

The original proof of the Higman-Neumann-Neumann theorem leads to ``large" finitely generated groups even if one starts with a relatively ``small" countable group; for instance, the resulting finitely generated group always contains non-abelian free subgroups. In the paper \cite{NN}, B.H. Neumann and H. Neumann suggested an alternative approach based on wreath products, which allowed them to show that every countable solvable group can be embedded in a $2$-generated solvable group. This approach was further developed by P. Hall \cite{Hall} and used by Phillips \cite{P} and  Wilson \cite{W} to prove analogous embedding theorems for torsion and residually finite groups.

In another direction, the Higman-Neumann-Neumann theorem was strengthened by the first author in \cite{Ols}. Recall that a map $\ell\colon H\to \mathbb N\cup\{ 0\}$ is a {\it length function} on a group $H$ if it satisfies the following conditions.
\begin{enumerate}
\item[($\bf L_1$)] $\ell (h)=0$ iff $h=1$.
\item[($\bf L_2$)] $\ell (h)=\ell (h^{-1})$ for any $h\in H$.
\item[($\bf L_3$)] $\ell (gh)\le \ell (g)+\ell(h)$ for any $g,h\in H$.
\end{enumerate}
Further we say that the \emph{growth of $H$ with respect to $\ell $ is at most exponential}  if there is a constant $a$ such that for every $n\in \mathbb N$, we have
\begin{equation}\label{ex}
   \#\{ h\in H\mid \ell (h)\le n\} \le a^n
\end{equation}
(this property was added to (L1)-(L3)  in \cite{Ols}). A length function $\ell\colon H\to \mathbb N\cup\{ 0\}$ defines a left-invariant metric on $H$ by $\d (g, h) = \ell(g^{-1}h)$ and vice versa.

If $G$ is a group generated by a finite set $X$ and $H$ is a (not necessary finitely generated) subgroup of $G$, the restriction of the word length $|\cdot |_X$ to $H$ obviously defines a length function on $H$ and the growth of $H$ with respect to $|\cdot |_X$ is at most exponential. In \cite{Ols}, the first author proved that any length function $\ell $ on a group $H$ satisfying (\ref{ex}) can be realized up to bi-Lipschitz equivalence by  such an embedding. The method of \cite{Ols} is based on small cancellation techniques and consequently suffers from the same problem as  the original Higman-Neumann-Neumann embedding: even if one starts with a ``small" (say, abelian) group $H$, the resulting group $G$ contains many free subgroups.

The goal of this paper is to suggest yet another construction which allows us to control both the geometry of the embedding and the algebraic structure of the resulting finitely generated group. Given a group $H$, we denote by $\mathcal E(H)$ the class of all groups $K$ such that every finitely generated subgroup of $K$ embeds in a finite direct power of $H$.  Our main result is the following.

\begin{thm}\label{main}
Let $H$ be a group of at most exponential growth with respect to a length function $\ell\colon H\to \mathbb N\cup\{ 0\}$. Then $H$ embeds (as a subgroup) into a group $G$ generated by a finite set $X$ such that the following conditions hold.
\begin{enumerate}
\item[(a)] There exists $c>0$ such that for every $h\in H$, we have $c|h|_X\le  \ell(h)\le |h|_X$.

\item[(b)] $G$ has a normal series $G_1\lhd G_2 \lhd G$, where
$G_1$ is abelian and intersects $H$ trivially,  $G_2/G_1\in {\cal E}(H)$, and $G/G_2$ is solvable of derived length at most $3$.
\end{enumerate}
\end{thm}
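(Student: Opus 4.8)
The plan is to follow the wreath-product lineage of \cite{NN} rather than the small-cancellation method of \cite{Ols}: the latter realizes the metric but produces a $G$ containing free subgroups, which would be fatal for transferring amenability, solvability or an identity from $H$ to $G$. So I would build $G$ by hand as a three-layer extension realizing the normal series in (b). The idea is that a finitely generated solvable \emph{control} group $S$ provides an efficient addressing of the elements of $H$, a \emph{base} assembled from copies of $H$ carries the actual group structure, and an auxiliary abelian \emph{flag} group reconciles the two. Concretely, I would first use the growth hypothesis (\ref{ex}) to fix an integer $q\ge a$ and a lamplighter-type group $S={\mathbb Z}_q\wr{\mathbb Z}$ (or a bounded iterate of such, of derived length at most $3$ and of exponential growth with base essentially $q$). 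Since the $\ell$-ball of radius $n$ has at most $a^n\le q^n$ elements, while the number of elements of $S$ of word length $O(n)$ is of order $q^{\,O(n)}$, there are enough ``addresses'' to choose an injective assignment $h\mapsto\sigma_h\in S$ with the $S$-word length of $\sigma_h$ comparable to $\ell(h)$ from both sides.

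Next I would form $G$ so that $G/G_2\cong S$ is the control group, $G_2/G_1$ is a restricted sum of copies (or finitely generated pieces) of $H$ permuted by $S$, and $G_1$ is the abelian flag group. Then $G_2/G_1\in{\cal E}(H)$ for the obvious reason: a finitely generated subgroup involves only finitely many coordinates and hence embeds into a finite direct power $H^k$. The flag group $G_1$ plays a double role, recording the bookkeeping that turns the set-theoretic coding $h\mapsto\sigma_h$ into a genuine homomorphism, and making $G$ finitely generated even though $H$ need not be; by construction it is disjoint from the image of $H$. The embedding $\iota\colon H\to G$ sends $h$ to the copy of $h$ at the base-point coordinate, which is manifestly an injective homomorphism.

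For part (a) the two inequalities are proved by opposite methods. The inequality $c|h|_X\le\ell(h)$ — equivalently, the existence of words of length $O(\ell(h))$ representing $\iota(h)$ — is the constructive direction: for each $h$ one travels to the address $\sigma_h$ using $O(\ell(h))$ control moves, deposits the seed data, and returns, the efficiency of the coding from (\ref{ex}) being exactly what keeps this length linear in $\ell(h)$. The reverse inequality $\ell(h)\le|h|_X$ asserts that there are no shortcuts, and I would obtain it by producing a length function $\bar\ell$ on $G$ that restricts to $\ell$ on $H$ and assigns length at most $1$ to every element of $X$; then $\ell(h)=\bar\ell(\iota(h))\le|h|_X$. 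Such a $\bar\ell$ is built from the natural retractions of $G$ onto $S$ and onto the base copy of $H$, which read off respectively the amount of travel and the value of $h$ that any representing word must pay for. Part (b) is then read off directly from the construction.

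The \emph{main obstacle} is the simultaneous control, and its sharpest form is finite generation of $G$. Since $H$ is in general infinitely generated and $\ell$ is an arbitrary length function rather than a word metric, one cannot simply take the wreath product $H\wr S$, which fails to be finitely generated. The real work is to replace the full copies of $H$ by finitely many seeds together with the flag group $G_1$, engineered so that finitely many control-translates of the seeds reconstruct every $\iota(h)$ by a short word, while $\iota$ remains an injective homomorphism, the base stays inside ${\cal E}(H)$, and the derived length of $G/G_2$ stays at most $3$. Keeping the coding efficient (for the lower bound $c|h|_X\le\ell(h)$) and homomorphic at the same time is where essentially all the difficulty lies, and the at most exponential growth hypothesis enters precisely here as the exact condition guaranteeing that a finite-alphabet addressing of the required efficiency exists.
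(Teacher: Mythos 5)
Your three-layer architecture (solvable control group addressing elements of $H$ via the growth hypothesis, a middle layer in $\mathcal{E}(H)$, an abelian bottom layer) is in fact exactly the paper's skeleton, with the paper's control group being the metabelian $M=\mathbb Z\wre\mathbb Z$. But the proposal stops precisely where the theorem begins: you explicitly defer ``the real work'' of making the seed/flag mechanism simultaneously homomorphic, finitely generated, and metrically faithful, and the one concrete device you do offer for the lower bound $\ell(h)\le|h|_X$ --- retractions onto $S$ and onto the base copy of $H$ that ``read off'' the travel and the value --- provably cannot work if the seeds carry elements of $H$ themselves. Projecting a word of length $m$ to the base-point coordinate expresses $h$ as a product of at most $m$ seed values, each seed having an address within travel distance $m$, so this argument yields only $\ell(h)=O(m^2)$; and the quadratic loss is realized, not an artifact. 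Take $H=\mathbb Z$ with $\ell(n)=|n|$: if $w$ is the address of the seed for an element $h_0$ with $\ell(h_0)=n$, the word $[g,\,w g^{k} w^{-1}]=g(wg^kw^{-1})g^{-1}(wg^{-k}w^{-1})$ has length $O(k+n)$ but deposits $h_0^{k}$, whose $\ell$-value is $kn$; with $k\approx n$ this violates $\ell(h)\le|h|_X$ by a quadratic factor.

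The paper closes this gap with two ideas that are absent from your outline. First, a \emph{generalized Magnus embedding}: the seed attached to $h$ is not $h$ but the pair $a_h^{\ell(h)}h$ and $a_h$ inside $V=A\wre H$, where $A$ is free abelian on $\{a_h\}$, so that recovering $h=a_h^{-\ell(h)}\bigl(a_h^{\ell(h)}h\bigr)$ forces the expenditure of $\ell(h)$ cancelling letters; the proof that no cheaper scheme exists (Lemma \ref{Magnus}) is a genuine argument via the Remeslennikov--Sokolov criterion and a cancellation graph, and it is what makes your ``flag group'' metrically load-bearing rather than mere bookkeeping. (In the counterexample above, $(a_{h_0}^{\ell(h_0)}h_0)^k$ now has abelian part of norm $k\ell(h_0)$ spread over $k$ coordinates, so the short word no longer represents $h_0^k$.) Second, \emph{parallelogram-free subsets of exponential growth} in $\mathbb Z\wre\mathbb Z$ (Lemma \ref{S}) are used as the address set, which forces each commutator $[g,rgr^{-1}]$ to have support of size at most one; this is what allows an honest copy of $H$ to be identified inside the finitely generated group $G$ and, via the finite-range argument (Lemma \ref{E}), keeps $G_2/G_1$ inside $\mathcal E(H)$ rather than in an unrestricted product, which is exactly where the classical Neumann--Neumann/Hall construction loses amenability. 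Without the first idea part (a) fails, and without the second part (b) fails; naming the difficulty, as you did, is not the same as resolving it.
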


In particular, our embedding allows us to carry over a wide range of properties from the group $H$ to the group $G$. For instance, we have the following.
\begin{cor}\label{cor0}
In the notation of Theorem \ref{main}, if $H$ is solvable (respectively, satisfies a non-trivial identity, elementary amenable, amenable, has property $A$, has finite decomposition complexity, uniformly embeds in a Hilbert space, etc.), then so is $G$.
\end{cor}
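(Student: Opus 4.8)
The plan is to exploit the two structural features of $G$ provided by Theorem \ref{main}: the normal series $G_1\lhd G_2\lhd G$ whose bottom factor $G_1$ is abelian and whose top factor $G/G_2$ is solvable of derived length at most $3$, together with the membership $G_2/G_1\in\mathcal E(H)$. For each property $\mathcal P$ in the list I would first pass $\mathcal P$ from $H$ to the middle factor $G_2/G_1$ through the class $\mathcal E(H)$, and then propagate $\mathcal P$ up the series to $G$. Throughout, the groups that arise are countable, being sections of the finitely generated group $G$, so the coarse invariants in the list are defined for them.

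The transfer to $\mathcal E(H)$ rests on one observation: if a class $\mathcal P$ is closed under subgroups, under finite direct powers, and under directed unions, then $H\in\mathcal P$ implies $\mathcal E(H)\subseteq\mathcal P$. Indeed, a finitely generated subgroup $L$ of any $K\in\mathcal E(H)$ embeds by definition into some power $H^n$; closure under powers gives $H^n\in\mathcal P$ and closure under subgroups gives $L\in\mathcal P$, so that $K$, being the directed union of its finitely generated subgroups, lies in $\mathcal P$. Every class named in the corollary, and every variety used below, enjoys these three closure properties, so $H\in\mathcal P$ yields $G_2/G_1\in\mathcal P$ in all cases.

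For the final step I would distinguish two families of properties according to their behaviour under group extensions. The analytic/geometric family --- elementary amenability, amenability, property $A$, finite decomposition complexity, and coarse embeddability into a Hilbert space --- is closed under extensions and contains every abelian group; consequently it contains every solvable group of finite derived length, so in particular both outer factors $G_1$ and $G/G_2$ belong to it, and applying extension-closure twice along $G_1\lhd G_2\lhd G$ gives $G\in\mathcal P$. The algebraic family --- solvability and satisfaction of a nontrivial identity --- I would treat through product varieties: if $H$ lies in a fixed proper variety $\mathcal V$ (the variety $\mathcal S_d$ of solvable groups of derived length $\le d$, or the variety determined by the prescribed law $w$), then $\mathcal E(H)\subseteq\mathcal V$ by the preceding paragraph, whence $G$ lies in the product variety $\mathcal A\,\mathcal V\,\mathcal S_3$, with $\mathcal A$ the abelian variety. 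Since a product of finitely many proper varieties is again proper (H. Neumann, \emph{Varieties of Groups}), $G$ satisfies a nontrivial identity; and in the solvable case $\mathcal A\,\mathcal S_d\,\mathcal S_3=\mathcal S_{d+4}$, so $G$ is solvable of derived length at most $d+4$.

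The closure properties used above are routine for most of the classes; the substantive input is the extension-permanence of the coarse invariants, which I would cite rather than reprove (Dadarlat--Guentner for exactness/property $A$ and for coarse embeddability, Guentner--Tessera--Yu for finite decomposition complexity). The one genuinely delicate case is coarse embeddability, whose permanence under extensions is known only in conditional form, typically requiring exactness of one of the factors; here this causes no trouble, because in every extension we form the relevant abelian or solvable factor is amenable, hence exact, so the conditional theorems apply and coarse embeddability survives the series $G_1\lhd G_2\lhd G$. Thus the main obstacle is not any individual deduction but selecting, for each geometric property, the permanence statement in the exact form the construction supplies; the purely algebraic cases reduce transparently to the arithmetic of product varieties.
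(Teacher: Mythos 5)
Your argument is correct, and for the analytic classes (elementary amenable, amenable, property $A$, finite decomposition complexity, uniform embeddability) it coincides with the paper's: the paper disposes of the corollary in a single sentence, noting that each of these classes contains all abelian groups and is closed under subgroups, direct unions, and extensions \cite{Cho,DG,GTY,vN} --- exactly your transfer $H\in\mathcal P\Rightarrow G_2/G_1\in\mathcal P$ followed by climbing the series $G_1\lhd G_2\lhd G$. The genuine difference is your treatment of the algebraic cases. The paper's justifying sentence silently omits ``solvable'' and ``satisfies a non-trivial identity'', and the one-line argument cannot cover them verbatim: the class of all solvable groups is not closed under direct unions, and a variety such as $\mathcal S_d$ or the variety of a law $w$ is not closed under extensions. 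Your product-variety route --- fix the proper variety $\mathcal V$ containing $H$, deduce $\mathcal E(H)\subseteq\mathcal V$ from closure under subgroups, finite direct powers, and direct unions, then conclude $G\in\mathcal A\mathcal V\mathcal S_3$, which is a proper variety since a product of proper varieties is proper, and which equals $\mathcal S_{d+4}$ when $\mathcal V=\mathcal S_d$ --- is the right repair and even yields an explicit derived-length bound that the paper never states. (Your blanket claim that ``every class named in the corollary'' has the three closure properties is false precisely for these two classes, but your variety formulation immediately corrects it, so no harm is done.) One bibliographic inaccuracy, harmless here: extension permanence of coarse embeddability into Hilbert space is not known only ``in conditional form'' --- Dadarlat--Guentner prove it unconditionally, and that is exactly what the paper cites \cite{DG} for; your detour through exactness of the amenable factors is therefore unnecessary, though valid.
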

Recall that property $A$ was introduced by  Yu \cite{Yu} and groups of finite decomposition complexity were introduced by Guentner, Tessera, and Yu \cite{GTY} with motivation coming from the Novikov conjecture and topological rigidity of manifolds, respectively. For definitions, properties, and applications we refer to \cite{GTY,RTY,Yu} and references therein. Groups uniformly embeddable in Hilbert spaces are discussed below. 

The classes of elementary amenable groups, amenable groups, countable groups with property A, countable groups of finite decomposition complexity, and countable groups uniformly embeddable in a Hilbert space contain abelian groups and are closed with respect to the operations of taking direct unions and subgroups. In addition, the first four classes are closed under taking arbitrary extensions; the  class of countable groups uniformly embeddable in a Hilbert space is closed under taking direct products and  extensions by abelian (or, more generally, exact) groups, see \cite{Cho,DG,GTY,vN}. These properties and Theorem 1.1 obviously imply Corollary 1.2

For amenable and elementary amenable groups, even the following fact was unknown. This is remarked by Gromov in \cite[Section 9.3]{Gro08}, where the reader can also find some potential applications of the existence of such an embedding.

\begin{cor}\label{cor1}
Every countable elementary amenable (respectively, amenable) group can be embedded in a finitely generated elementary amenable (respectively, amenable) group.
\end{cor}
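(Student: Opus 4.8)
The plan is to deduce this statement directly from Theorem~\ref{main} together with Corollary~\ref{cor0}. The only thing that prevents an immediate application is that Theorem~\ref{main} takes as input not merely a countable group but a group equipped with a length function of at most exponential growth, whereas here we start from an arbitrary countable (elementary) amenable group. Thus the whole task reduces to the following elementary observation, which I would isolate as a lemma: \emph{every countable group $H$ admits a length function $\ell\colon H\to\mathbb N\cup\{0\}$ satisfying \eqref{ex}}. Once this is available, I would equip $H$ with such a length function, apply Theorem~\ref{main} to embed $H$ into a finitely generated group $G$, and invoke Corollary~\ref{cor0} to transport (elementary) amenability from $H$ to $G$.

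To construct the length function, enumerate the non-trivial elements of $H$ as $h_1,h_2,\dots$ and assign to $h_i$ the weight $i$. Set $\ell(1)=0$ and, for $h\neq 1$,
$$\ell(h)=\min\Bigl\{\, i_1+\cdots +i_k \ \Big|\ h=h_{i_1}^{\pm1}\cdots h_{i_k}^{\pm1}\,\Bigr\}.$$
Conditions (L1)--(L3) are immediate: positivity of the weights yields (L1), inverting a word yields (L2), and concatenating words yields (L3). For the growth bound I would count admissible words rather than elements, since each $h$ with $\ell(h)\le n$ is represented by at least one word of total weight at most $n$, so $\#\{h\in H\mid \ell(h)\le n\}$ is bounded by the number of such words. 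A routine generating-function computation (the generating function for these words is $(1-z)/(1-3z)$) shows that there are exactly $3^n$ words $h_{i_1}^{\pm1}\cdots h_{i_k}^{\pm1}$ of total weight at most $n$; hence $\#\{h\in H\mid \ell(h)\le n\}\le 3^n$, and \eqref{ex} holds with $a=3$.

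With the lemma in hand the corollary follows at once: a countable (elementary) amenable group $H$, equipped with the length function above, satisfies the hypotheses of Theorem~\ref{main}, which embeds it into a finitely generated group $G$, and since $H$ is (elementary) amenable, Corollary~\ref{cor0} guarantees that $G$ is (elementary) amenable as well. I do not expect a genuine obstacle at this stage: all of the real work sits in Theorem~\ref{main} and in the closure properties underlying Corollary~\ref{cor0}. The only point that deserves care is the existence of the length function, and even there the sole subtlety is the choice of weights, which must grow fast enough (any weights $w_i$ for which $\sum_i 2z^{w_i}$ has positive radius of convergence would serve equally well) so that the number of short words remains exponentially bounded.
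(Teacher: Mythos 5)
Your proposal is correct and takes essentially the same route as the paper: the paper states Corollary \ref{cor1} as an immediate consequence of Theorem \ref{main} and Corollary \ref{cor0}, leaving implicit exactly the folklore fact you isolate, namely that every countable group admits a length function satisfying (\ref{ex}). Your weighted-generator construction of that length function (with the count of $3^n$ words of total weight at most $n$) is a correct filling-in of this implicit step, so the two arguments coincide in substance.
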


The proof of Theorem \ref{main} is based on a modified version of the construction of P. Hall \cite{Hall}, which in turn goes back to B.H. Neumann and H. Neumann \cite{NN}. In general, this construction does not preserve elementary amenability, amenability, etc., as it uses unrestricted wreath products. A small improvement based on the existence of parallelogram-free subsets (see Definition \ref{para}) in finitely generated solvable groups fixes this problem. However the original construction does not allow one to control the distortion of the embedding, so an essential modification is necessary to ensure (a). Our main technical tool here is a ``metric" version of the Magnus embedding, which seems to be of independent interest (see Section 2).

One possible application of Theorem \ref{main} is to constructing solvable, amenable, etc., groups with unusual geometric properties. In general, it is much easier to build an interesting geometry inside an infinitely generated group; then Theorem \ref{main} allows us to embed it in a finitely generated one. This philosophy has a few almost immediate implementations. We briefly discuss them below and refer to Section 3 for definitions and details.

Recall that to each map from a finitely generated group $G$ to a metric space $(S, \d _S)$, one associates a non-decreasing \emph{compression function} $\comp_f\colon \mathbb R_+\to \mathbb R_+$ defined by
$$
\comp _f (x) =\inf_{\d _X(u,v)\ge x} \d_S (f(u), f(v)),
$$
where $\d_X$ is the word metric on $G$ with respect to a finite generating set $X$.

Given two functions $r,s\colon \mathbb R_+\to \mathbb R_+$, we write $r\preceq s$ if there exists $C>0$ such that $$r(x)\le C\cdot s(Cx)+C$$ for every $x\in \mathbb R_+$. As usual, $r\sim s$ if  $r\preceq s$ and $s\preceq r$. Up to this equivalence, the compression function $\comp _f$ is independent of the choice of a particular finite generating set of $G$. If $f$ is Lipschitz and satisfies $\comp _f(x)\to \infty $ as $x\to \infty$, then $f$ is called a \emph{uniform embedding.}

The study of group embeddings into Hilbert (or, more generally, Banach) spaces was initiated by Gromov in \cite{Gro}. Motivated by his ideas, Yu \cite{Yu} and later Kasparov and Yu  \cite{KY} proved that finitely generated groups uniformly embeddable in a Hilbert space (respectively, a uniformly convex Banach space) satisfy the coarse Novikov conjecture. Another interesting result  was proved by  Guentner and Kaminker in \cite{GK}. They showed that if a finitely generated group $G$ admits a uniform embedding in a Hilbert space with compression $\succeq x^\e$ for some $\e >1/2$, then the reduced $C^*$-algebra of $G$ is exact; moreover, if the embedding is equivariant, then $G$ is amenable. On the other hand, by a result of Brown and Guentner \cite{BG}, any metric space of bounded geometry can be uniformly embedded into the $\ell^2$-sum $\bigoplus l^{p_n} (\mathbb N )$ for some sequence of numbers $p_n \in (1,+\infty )$, $p_n \to \infty $.

In \cite{ADS}, Arzhantseva, Drutu, and Sapir showed that for every function $\rho\colon\mathbb R\to \mathbb R$ satisfying $\lim\limits_{x\to \infty} \rho (x)=\infty $, there exists a finitely generated group $G$ such that every Lipschitz map from $G$ to a uniformly convex Banach space has compression function $\preceq \rho $. The groups constructed in \cite{ADS} contain free subgroups and hence are not amenable. On the other hand, computations made in various papers (see, e.g., \cite{AGS,T} and references therein) suggest that for amenable groups the situation can be different.

For some time it was unknown even whether every finitely generated amenable group admits a Lipschitz embedding in a Hilbert space with compression function $\succeq x^\e $ for some $\e>0$. This question was asked in \cite{ADS,AGS,T} and answered negatively by Austin in \cite{A}. Austin also remarks that his approach can probably be extended to give a finitely generated amenable group $G$ such that every Lipschitz map $f\colon G\to L_p$, $p\in [1, \infty)$, has compression function $\preceq \log x$. However his methods do not seem to work beyond this limit and he asks whether every finitely generated amenable group $G$ admits a Lipschitz embeddings $f\colon G \to L_p$ for every $p\in  [1,\infty)$ with compression function $\succeq \log x$. We show that the answer is negative and, moreover, an analogue of the Arzhantseva-Drutu-Sapir result holds for amenable groups.

\begin{cor}\label{cor2}
Let $ \rho \colon \mathbb R_+\to \mathbb R_+$ be any function such that $\lim\limits_{x\to \infty}\rho (x)=\infty $. Then there exists a finitely generated elementary amenable group $G$ such that for every Lipschitz embedding $f$ of $G$ into a uniformly convex Banach space, the compression function of $f$ satisfies $\comp_f  \preceq \rho $.
\end{cor}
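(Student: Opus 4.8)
The plan is to deduce the corollary from Theorem \ref{main} and Corollary \ref{cor0} by first building the bad geometry inside an infinitely generated group. It suffices to construct a countable elementary amenable group $H$ together with a length function $\ell$ of at most exponential growth such that, for every uniformly convex Banach space $B$ and every map $g\colon(H,\d)\to B$ that is Lipschitz for the metric $\d(u,v)=\ell(u^{-1}v)$, the compression of $g$ is $\preceq\rho$. Indeed, Theorem \ref{main} then embeds $H$ into a finitely generated $G$ with $c|h|_X\le\ell(h)\le|h|_X$, so on $H$ the two metrics $\d$ and $|\cdot|_X$ are bi-Lipschitz, and Corollary \ref{cor0} makes $G$ elementary amenable. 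For a Lipschitz $f\colon G\to B$, the restriction $f|_H$ is Lipschitz for $\d$, and since $\ell\le|\cdot|_X$ any pair $u,v\in H$ that is $\d$-far is also $X$-far; hence any collapsed pairs produced inside $H$ witness $\comp_f\preceq\rho$ for $G$ as well. (We may assume $\rho$ is nondecreasing and $\rho(x)\le x$, since replacing $\rho$ by a smaller function only strengthens the statement.)

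For $H$ I would take a direct sum $H=\bigoplus_{k\ge1}F_k$ of finite groups; being locally finite, $H$ is elementary amenable regardless of the $F_k$. Choose the $F_k$ so that the Cayley graphs $\mathrm{Cay}(F_k,S_k)$ form a family of \emph{super-expanders}, i.e.\ bounded-degree graphs satisfying, for every uniformly convex $B$, a Poincar\'e inequality
$$\frac{1}{|F_k|^2}\sum_{u,v}\|\varphi(u)-\varphi(v)\|^2\ \le\ \frac{1}{\gamma(B)}\,\frac{1}{|F_k|\,|S_k|}\sum_{u\sim v}\|\varphi(u)-\varphi(v)\|^2$$
with a constant $\gamma(B)>0$ independent of $k$; such families, realized as Cayley graphs of finite groups, are provided by the work of V.~Lafforgue and of Mendel--Naor. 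Set $\ell(h)=\sum_k s_k\,|h_k|_{S_k}$ with the \emph{geometric} scales $s_k=2^k$. The geometric growth of $s_k$ is exactly what keeps $\ell$ of at most exponential growth: in the ball $\{\ell\le r\}$ only coordinates with $s_k\le r$ are active, each contributes a factor at most $\exp(O(r/s_k))$, and $\sum_k r/s_k=r\sum_k 2^{-k}=O(r)$, whence $|\{\ell\le r\}|\le e^{O(r)}$. Passing to a subsequence of the super-expander family, I would then choose the diameters $d_k=\diam\mathrm{Cay}(F_k,S_k)$ (equivalently the sizes $|F_k|$) recursively and as large as needed so that $\rho(\tfrac12 s_kd_k)\ge k\,2^{k}$ for all $k$, which is possible because $\rho\to\infty$.

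It remains to bound the compression. Fix a uniformly convex $B$ and, after rescaling, a $1$-Lipschitz map $g\colon(H,\d)\to B$. On the scaled copy of $\mathrm{Cay}(F_k,S_k)$ inside $H$ adjacent vertices are at $\d$-distance $s_k$, so $\|g(u)-g(v)\|\le s_k$ there; the Poincar\'e inequality then bounds the average of $\|g(u)-g(v)\|^2$ over all pairs by $s_k^2/\gamma(B)$. Since all but a vanishing fraction of pairs in a bounded-degree expander lie at graph distance $\ge\tfrac12 d_k$, a Markov/averaging argument produces a pair $u_k,v_k$ with $\d(u_k,v_k)\ge\beta\,s_kd_k$ and $\|g(u_k)-g(v_k)\|\le M(B)\,s_k$. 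These pairs are $\d$-far, hence $X$-far, and give $\comp_g(x)\le M(B)\,2^{k}$ for every $x\le\beta s_kd_k$. For an arbitrary $x$, take the least $k$ with $\beta s_kd_k\ge x$; then $x>\beta s_{k-1}d_{k-1}$, so for $C$ large the recursive choice yields $\rho(Cx)\ge\rho(\tfrac12 s_{k-1}d_{k-1})\ge(k-1)2^{k-1}$, while $\comp_g(x)\le M(B)2^{k}$. Comparing the two (for $k$ large, with the finitely many small $k$ absorbed into the additive constant) gives $\comp_g\preceq\rho$ with a constant depending only on $B$ and the Lipschitz constant; note that $H$ itself is built before $B$ is seen.

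The two points that require care are precisely the tension between the constraints. First, at-most-exponential growth forces the scales $s_k$ to grow geometrically, so only one expander can be installed per octave of scale and the diameters must be pushed out recursively against $\rho$; balancing these so that growth stays exponential while the collapsed pairs still beat $\rho$ at \emph{every} scale is the main bookkeeping obstacle. Second, and more essentially, the Poincar\'e inequality must hold for \emph{every} uniformly convex target simultaneously -- ordinary expanders, good only for $L_p$ or Hilbert space, do not suffice -- so the construction genuinely relies on the existence of super-expanders arising as Cayley graphs of finite groups.
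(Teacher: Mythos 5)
Your proposal is correct and follows essentially the same route as the paper's own proof: there too, $H$ is the restricted direct sum of Lafforgue's finite expander quotients $G_k$ with a weighted length function $\ell(h)=\sum_k k\,\d_k(1,g_k)$ (Lemma \ref{exp} verifies exponential growth via a Lipschitz embedding into $G\wre\mathbb Z$ instead of your direct count with scales $2^k$), after which Theorem \ref{main} is applied and the collapse property of Lafforgue's expanders is quoted as a black box (Lemma \ref{ADS}, from Arzhantseva--Drutu--Sapir) in place of your Poincar\'e-plus-Markov derivation. Two side remarks of yours are slightly off but harmless: exponential growth does \emph{not} force geometric scales (the paper's linear weights $k$ work, via the wreath-product trick or a partition-counting bound), and ``all but a vanishing fraction of pairs lie at graph distance $\ge\tfrac12 d_k$'' should be stated with $\ge\beta d_k$ for a constant $\beta$ depending on the degree and spectral gap, since an expander family's diameter may exceed $2\log_D n_k$ --- which is exactly how you in fact use it.
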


The proof of the corollary is inspired by \cite{ADS} and uses expanders constructed by Lafforgue \cite{Laf}.

Our approach can also be used to obtain some known results about F\o lner functions introduced by Vershik \cite{Ver}. For a finitely generated amenable group $G$, its F\o lner function, $F\o l_G\colon \mathbb N\to \mathbb N$  measures the asymptotic growth of F\o lner sets of $G$. Vershik conjectured that there exist amenable groups $G$ with the F\o lner functions growing arbitrary fast and his conjecture remained open for several decades until Erschler proved it in \cite{Ers}. The groups constructed in \cite{Ers} are of intermediate growth and, in particular, they are amenable but not elementary amenable \cite{Cho}.

Note that for many elementary amenable groups $G$, $F\o l_G$ is bounded from above by an iterated exponential function. For instance, this is true for all finitely generated solvable groups by the main result of \cite{Ers03}. On the other hand, the following was announced by Eschler in \cite{Ers03} and later proved by Gromov in \cite{Gro08}. Corollary \ref{cor2} allows us to recover this result.

\begin{cor}[Erschler-Gromov]\label{cor3}
For any function $\sigma \colon \mathbb N\to \mathbb N$, there exists an elementary amenable group $G$ with $F\o l _G \succeq \sigma $.
\end{cor}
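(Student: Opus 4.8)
The plan is to read Corollary \ref{cor2} through the known correspondence between the F\o lner function of an amenable group and the best compression attainable by its Lipschitz embeddings into a Hilbert space (see \cite{T} and \cite[Section 9.3]{Gro08}). Since a Hilbert space is uniformly convex, Corollary \ref{cor2} applies verbatim to Hilbert-space embeddings and, for any $\rho\to\infty$, furnishes a finitely generated elementary amenable group all of whose Lipschitz embeddings into uniformly convex Banach spaces have compression $\preceq\rho$. The mechanism I would exploit is that a \emph{slowly} growing F\o lner function forces the existence of a \emph{large}-compression embedding; read in an order-reversing fashion, a group admitting only small-compression embeddings must have a fast-growing F\o lner function. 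Thus a sufficiently slow choice of $\rho$ in Corollary \ref{cor2} should force $F\o l_G$ to dominate the prescribed $\sigma$.

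To make the correspondence quantitative I would use the F\o lner sets directly. Fix a nested F\o lner sequence $(F_k)$ with $|sF_k\triangle F_k|/|F_k|\le\e_k$ for every generator $s$, where $\e_k\to 0$, so that $|F_k|\approx F\o l_G(1/\e_k)$. Setting $\psi_k=\mathbf 1_{F_k}/\sqrt{|F_k|}\in\ell^2(G)$ and letting $G$ act by left translation, we have $\|g\psi_k-\psi_k\|^2=|gF_k\triangle F_k|/|F_k|$, which is $\le|g|^2\e_k$ for every $g$ and equals $2$ whenever $g\notin F_kF_k^{-1}$, in particular once $|g|$ exceeds a radius comparable to $\diam(F_k)$. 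The equivariant map
$$
f(g)=\bigoplus_k\alpha_k\,(g\psi_k-\psi_k)
$$
is Lipschitz as soon as $\sum_k\alpha_k^2\e_k<\infty$, while its compression satisfies
$$
\comp_f(t)\succeq\Big(\sum_{k\,:\,\diam(F_k)<t}\alpha_k^2\Big)^{1/2}.
$$
Because $\e_k\to 0$, one may arrange $\sum_k\alpha_k^2=\infty$ while keeping $\sum_k\alpha_k^2\e_k<\infty$, so the compression tends to infinity; optimising the weights against an upper bound $F\o l_G\preceq\phi$ yields a Lipschitz Hilbert-space embedding with compression $\succeq\tau_\phi$ for an explicit, order-reversing functional $\phi\mapsto\tau_\phi$ with $\tau_\phi\to\infty$. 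The matching upper estimate for $\comp_f$ in terms of $F\o l_G$, which upgrades this into a genuine two-sided equivalence $\comp_f\sim\tau_{F\o l_G}$, is supplied by \cite{T}.

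Granting this correspondence, the proof is short. Given $\sigma$, which we may take non-decreasing and unbounded, choose $\rho\colon\mathbb R_+\to\mathbb R_+$ with $\rho\to\infty$ but growing slowly enough that $\rho\preceq\tau_\sigma$; this is possible since $\tau_\sigma\to\infty$. Apply Corollary \ref{cor2} to obtain a finitely generated elementary amenable group $G$ for which every Lipschitz embedding into a uniformly convex Banach space has $\comp_f\preceq\rho$; in particular the best Hilbert-space compression of $G$ is $\preceq\rho\preceq\tau_\sigma$. By the two-sided correspondence, $\tau_{F\o l_G}\preceq\tau_\sigma$, and since $\phi\mapsto\tau_\phi$ is order-reversing and invertible up to $\sim$, this forces $F\o l_G\succeq\sigma$, as required. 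I expect the main obstacle to be precisely this matching step: one must invert the F\o lner-to-compression functional cleanly, pass between the asymptotic relations $\preceq$ and $\succeq$ without loss, and verify that the required $\rho$ can still be taken to tend to infinity so that Corollary \ref{cor2} remains applicable. All the genuine content sits in the precise form of the F\o lner--compression correspondence of \cite{T} and in this asymptotic bookkeeping.
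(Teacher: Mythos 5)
Your overall route is exactly the paper's: combine Corollary \ref{cor2} with the standard equivariant Hilbert-space embedding built from F\o lner sets, whose compression is bounded \emph{below} by a functional $\tau_{F\o l_G}$ of the F\o lner function (roughly $\tau_\phi(t)\approx\sqrt{\log\phi^{-1}(t)}$ after the connected-component trick $\diam(F_k)\le |F_k|$), so that a group all of whose embeddings have small compression must have a large F\o lner function. The paper's proof is precisely this, phrased as a contradiction with boundedness of $F\o l$ on the class and citing the construction from \cite{Yu}. However, two of your specific steps are wrong as stated, and the second one is a genuine gap.

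First, the claimed \emph{two-sided} correspondence $\comp_f\sim\tau_{F\o l_G}$ does not exist and is not in \cite{T}: the F\o lner function gives no upper bound on compression. For instance $\mathbb Z^d$ has $F\o l_{\mathbb Z^d}(n)\sim n^d$ but compression $\sim x$ for every $d$, while $\tau_{n^d}(t)\sim\sqrt{\log t}/\sqrt d\sim\sqrt{\log t}$; similarly the lamplighter $(\mathbb Z/2\mathbb Z)\wr\mathbb Z$ has exponential F\o lner function yet compression $\succeq x^{1/2}$. Fortunately you only ever use the true one-sided bound $\comp_f\succeq\tau_{F\o l_G}$, so this claim can simply be deleted. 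The real gap is the last step: ``$\tau_{F\o l_G}\preceq\tau_\sigma$, and since $\phi\mapsto\tau_\phi$ is order-reversing and invertible up to $\sim$, this forces $F\o l_G\succeq\sigma$.'' The functional $\tau$ is \emph{not} invertible up to $\sim$: the example above already shows $\tau_n\sim\tau_{n^2}$ while $n\not\sim n^2$. Quantitatively, the relation $\preceq$ tolerates only affine rescalings $x\mapsto Cx$, and these do not survive inversion through the $\log$/$\exp$ inside $\tau$: unwinding $\tau_{F\o l_G}\preceq\tau_\sigma$ with constant $C$ gives only $F\o l_G^{-1}(t)\le\bigl(\sigma^{-1}(Ct)\bigr)^{4C^2}$, i.e.\ $\sigma(x)\le C\,F\o l_G(x^{K}+1)$ with $K=4C^2$ --- a \emph{polynomial} distortion of the argument, which is not $\sigma\preceq F\o l_G$. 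The repair (and this is the actual content you deferred as ``bookkeeping'') is to take $\rho$ not merely $\preceq\tau_\sigma$ but far below it, with enough margin to absorb every constant before it is quantified: e.g.\ $\rho(y):=\inf\{(\log z)^{1/4}:\sigma(z)^2\ge y\}$, which still tends to infinity. Then for every $C$ one has $C\rho(C\sigma(x))+C\le\sqrt{\log x}$ for large $x$, so $\comp\preceq\rho$ together with the one-sided bound yields $F\o l_G^{-1}(\sigma(x))<x$, i.e.\ $F\o l_G(x)>\sigma(x)$ for all large $x$, which does give $F\o l_G\succeq\sigma$. With that quantifier-careful choice of $\rho$ your argument closes; as written, it does not.
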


 The last application discussed in this paper concerns the class $EA$ of elementary amenable groups. Recall that every $G\in EA$ can be ``constructed" from finite and abelian groups by taking subgroups, quotients, extensions, and direct unions. The \emph{elementary class} of $G$, $\c (G)$, is an ordinal number that measures the complexity of this procedure (see Section 3 for the precise definition). It is easy to see that for every countable group $G\in EA$, one has $\c (G)< \omega_1$, where $\omega _1$ is the first uncountable ordinal; however it is unclear how large $\c(G)$ can be. For finitely generated groups this question is mentioned by Gromov in \cite[Section 9.3]{Gro08}. For subgroups of Thompson's group $F$ it was also addressed by Brin in \cite{Brin}, where he showed that for every non-limit ordinal $\alpha \le \omega^2+1$ there exists an elementary amenable subgroup $G\le F$ such that $\c(G)=\alpha $. However Brin remarks that his approach does not work beyond $\omega^2 +1$, at least for groups of orientation preserving piecewise linear self homeomorphisms of the unit interval.

In the corollary below we give a complete description of ordinals that can be realized as elementary classes of countable elementary amenable groups.

\begin{cor} \label{EG} Let $EA_c$ and $EA_{fg}$ denote the sets of all countable elementary amenable and finitely generated elementary amenable groups, respectively.  Then we have
$$\c(EA_c)=\{ \alpha +1\mid \alpha <\omega _1\} \cup\{ 0\} $$
and
$$\c(EA_{fg})=\{ \alpha +2\mid \alpha <\omega _1\} \cup\{ 0, 1\}.$$
\end{cor}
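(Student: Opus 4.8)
The plan is to compute the two sets of realizable elementary classes separately, first understanding $\c(EA_c)$ and then deducing $\c(EA_{fg})$ from it via Theorem~\ref{main}. I would begin by recalling the inductive definition of the elementary class: $\c(G)=0$ iff $G$ is trivial, class $1$ collects the finite and abelian groups (more precisely the groups built in a single step from these), and at each successor stage $\alpha+1$ one closes the previously constructed groups under extensions and direct unions. A key structural observation is that for a \emph{non-trivial} group the elementary class is always a successor ordinal: the closure operations (extensions, direct unions) produce a group whose class is the successor of the supremum of the classes of its constituents, so limit ordinals never occur as $\c(G)$ for $G\in EA$. This explains the shape $\{\alpha+1\mid \alpha<\omega_1\}\cup\{0\}$ in the first equality.

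For the first equality, the inclusion $\c(EA_c)\subseteq\{\alpha+1\mid\alpha<\omega_1\}\cup\{0\}$ follows from the successor remark together with the standard fact that a countable elementary amenable group has elementary class below $\omega_1$ (each construction step adds only countably much, and $\omega_1$ is regular). For the reverse inclusion I would construct, for every $\alpha<\omega_1$, a countable group of class exactly $\alpha+1$. The natural approach is transfinite recursion: having produced a countable group $H_\alpha$ of class $\alpha+1$, take a countable direct sum or an iterated wreath product over the previously built groups to climb to the next successor, and at limit stages $\lambda$ take a direct union $\bigcup_{\alpha<\lambda}H_\alpha$ of a cofinal chain, which has class $\lambda+1$. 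One must verify the \emph{lower} bound on the class at each stage, i.e. that the construction genuinely requires as many steps as claimed; this is the delicate direction and is typically where one has to argue that no shortcut exists.

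The second equality is where Theorem~\ref{main} does the essential work. Given a countable elementary amenable group $H$ with $\c(H)=\alpha+1$, Corollary~\ref{cor0} (or directly the normal series in Theorem~\ref{main}(b)) embeds $H$ into a finitely generated elementary amenable group $G$ whose structure is governed by $G_1\lhd G_2\lhd G$ with $G_1$ abelian, $G_2/G_1\in\mathcal E(H)$, and $G/G_2$ solvable of derived length at most $3$. Since groups in $\mathcal E(H)$ are built from subgroups of direct powers of $H$, their elementary class is at most $\c(H)=\alpha+1$; extending by the abelian $G_1$ and by the solvable (hence low-class) quotient $G/G_2$ raises the class by at most a bounded finite amount, and a careful bookkeeping of these extensions shows $\c(G)=\alpha+2$. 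Conversely, a finitely generated group cannot have class a successor-of-a-successor below this threshold be skipped: one shows the only finitely generated groups of very small class are the trivial group (class $0$) and the finite or finitely generated abelian groups (class $1$), while every larger finitely generated elementary amenable group has class of the form $\alpha+2$. Combining the realizability from the embedding with these two exceptional small cases yields $\c(EA_{fg})=\{\alpha+2\mid\alpha<\omega_1\}\cup\{0,1\}$.

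The main obstacle I anticipate is the exact computation $\c(G)=\alpha+2$ from $\c(H)=\alpha+1$: the upper bound from the normal series is routine, but pinning down that the class increases by \emph{exactly} two (not just by a bounded amount) requires a matching lower bound, forcing one to argue that the finite-generation obstruction genuinely costs one extra step beyond the intrinsic class of $H$ and its direct powers. Establishing that every finitely generated elementary amenable group of class $>1$ has class of successor-of-successor type — equivalently, that class $\alpha+1$ with $\alpha$ a limit or $\alpha=0$ cannot occur for a finitely generated group except in the two listed exceptions — is the crux, and I expect it to hinge on the fact that a finitely generated group is not a direct union of proper subgroups, so the final step in its construction must be an extension rather than a union, shifting the realizable classes by one relative to the countable case.
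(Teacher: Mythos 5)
Your outline reproduces the paper's skeleton --- the constraint side is Lemma~\ref{succ} (classes are never limit ordinals, countable classes lie below $\omega_1$, finitely generated classes are successors of non-limit ordinals), and realizability is to come from Theorem~\ref{main} --- but the realizability half, which is where essentially all of the paper's work lies (Proposition~\ref{EGprop}), contains two genuine gaps. First, your assertion that every $K\in\mathcal E(H)$ satisfies $\c(K)\le\c(H)$ is false for a general $H$: a finite direct power is built by iterated extensions, so the only a priori bound is $\c(H^n)\le\c(H)+n-1$, and hence for $K\in\mathcal E(H)$ one gets only $\c(K)\le\c(H)+\omega+1$ (the f.g.\ subgroups of $K$ embed in various $H^n$, and $K$ is their direct union). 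This destroys the exact ``$+1$''/``$+2$'' bookkeeping. The paper's Proposition~\ref{EGprop}(a) exists precisely to repair this: $H$ is taken to be the restricted direct product of groups $H_i$ with $\c(H_i)$ cofinal in a \emph{limit} ordinal $\alpha$, so that every f.g.\ subgroup of $H^n$ already lies in some $\bigl(\prod_{i\le k}H_i\bigr)^n$, whose class is still $<\alpha$. For the same reason your successor step in the countable case (``take a countable direct sum or an iterated wreath product \dots to climb to the next successor'') is unjustified on the \emph{upper}-bound side, not only the lower-bound side you flag: a direct sum of copies of a class-$(\beta+1)$ group is a priori only of class $\le\beta+\omega+1$. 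Indeed, in the paper even the countable statement leans on the finitely generated construction: the infinite successor classes $\alpha+n$, $n\ge 2$, are realized by the f.g.\ groups of Proposition~\ref{EGprop}(b).

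Second, your scheme ``countable class $\alpha+1$ yields f.g.\ class $\alpha+2$ via Theorem~\ref{main}'' can only be run when $\alpha$ is a limit ordinal, whereas the corollary needs $\alpha+2$ for \emph{every} $\alpha<\omega_1$. Your lower bound (a f.g.\ group is not a proper direct union, so its class cannot be $\lambda+1$ with $\lambda$ limit) is exactly Lemma~\ref{succ}, and it says nothing when $\alpha$ is a successor: to realize $\omega+3$, say, you would embed a countable $H$ with $\c(H)=\omega+2$ into a f.g.\ group, but $\omega+2$ is itself an admissible f.g.\ class, so nothing forces the class to go up; and the upper bound also fails there, because the special $H$ of Proposition~\ref{EGprop}(a) exists only for limit $\alpha$. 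The paper covers these ordinals by a separate induction: $L=L_0\wre L_0$ with $L_0$ f.g.\ of class $\alpha+n-1$, where the invariant $\c(L_0)=\c(L_0^\omega)$ is carried along so that the base of the wreath product does not inflate the class (upper bound), and where the lower bound is proved by showing that in any decomposition of $L$ as an extension with kernel $N$ and quotient in $EG_{\alpha+n-2}$, the kernel must contain a subgroup $D=[a,B]$ surjecting onto $L_0$, a contradiction. This entire layer is absent from your proposal. (Two smaller points: in the paper's indexing, finite and abelian groups have class $0$, not $1$; and to get $\alpha+2$ rather than $\alpha+3$ one must pass from $G$ to $L=G/G_1$, using $G_1\cap H=\{1\}$ from Theorem~\ref{main}(b), since the abelian bottom layer $G_1$ would otherwise cost one more extension.)
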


Yet another application of our main result will be given in the forthcoming joint paper \cite{DO}
by Tara Davis and the first author: \emph{Every super-additive
function $\mathbb N\cup\{ 0\}\to \mathbb N \cup \{ 0\}$ can be realized (up to equivalence) as the distortion function of a cyclic subgroup in a finitely generated solvable group. }(For definitions and background see Gromov's paper \cite{Gro}).

We conclude with the following.

\begin{prob}
Does every recursively presented countable amenable (or elementary amenable) group embed into a finitely presented amenable group?
\end{prob}

\section{Preliminary results}

Let $G$ be a group generated by a set $X\subseteq G$. Given an element $g\in G$, one defines the word length of $g$ with respect to $X$, $|g|_X$, as the length of a shortest word in the alphabet $X\cup X^{-1}$ that represents  $g$ in $G$.

We recall the definition of the (standard) wreath product of two groups
$A$ and $B.$ The {\it base subgroup} $\overline W$ is the set of functions from $B$ to $A$ with
pointwise multiplication. The group $B$ acts on $\overline W$ from the left by automorphisms,
such that for $f\in W$
and $b\in B,$  the function  $b\circ f $ is given by

$$(b\circ f)(x) =  f(x b) \;\;\; for \;\;\; every\;\;\;  x\in B$$

This action defines a semidirect product $\overline WB$ called the {\it Cartesian (or unrestricted) wreath product}
of the groups $A$ and $B,$ denoted $A \Wre B.$ Hence we have $bfb^{-1} = b\circ f$ in this group,
and every element of $A\Wre B$ can be uniquely written as $fb,$ where $ f\in \overline W, b\in B$.

The functions $B \to A$ with finite support, i.e. with the condition $f(x)=1$ for almost every $x\in B,$
form a subgroup $W$ in $\overline W$. Respectively, we have the subgroup $ WB$ of $A \Wre B,$
denoted $A \wre B$ and called the {\it direct (or restricted) wreath product} of the groups $A$ and $B.$
$W$ is called \emph{the base} of  $A \wre B$ and is the direct product of the subgroups $A(b)$ ($b\in B$) isomorphic to $A,$ such that and  $A(b)=bA(1)b^{-1}$ in $A \wre B.$ One may identify the subgroup $A(1)$ with $A,$ and so the
wreath product is generated by the subgroups $A$ and $B.$

If the group $A$ is abelian, then so is $W,$ and one may use the additive notation for the
elements of the base subgroup $W$ of $A \wre B.$ The base $W$ becomes a module over the group
ring $\mathbb Z B.$ In particular, if $A$ is a free abelian group with basis $(a_1,\dots, a_n),$
then $W$ is a free $\mathbb Z B$-module, and in the module notation, every element of
$W$ has a unique presentation as $t_1 \circ a_1 +\dots + t_n \circ a_n$ with $t_1,\dots,t_n\in \mathbb Z B.$

\paragraph{Generalized Magnus embedding. }
Let $F$ be a free group (not necessary finitely generated), $N$ a normal subgroup of $F$. The standard Magnus homomorphism may be thought of as an embedding of a group of the form $F/[N,N]$ in the wreath product $A \wre F/N $, where $A\simeq F/[F,F]$. The goal of this section is to describe a modified version that also depends on a length function $\ell $ on $F/N$. Our main result in this direction is Lemma \ref{Magnus}; its analogue for the standard version of the Magnus embedding, Corollary \ref{Mcor}, also seems new.

We start by recalling the standard definition. Let $H$ be a group with a set of generators $(h_i)_{i\in I}$, $F$ the free group with basis $(x_i)_{i\in I}$.  Denote by $N$ the kernel of the homomorphism $F\to H$ given by $x_i\mapsto h_i$. Let also $A$ be the free abelian group with basis $(a_i)_{i\in I}$ and let $V= A \wre H$. The standard \emph{Magnus homomorphism} $\mu_{0}\colon F\to V$ maps $x_i$ to $ a_i h_i$ ($i\in I$). By Magnus's theorem \cite{M}, $\ker \mu_{0} = [N,N]$, the derived subgroup of $N$.

Every element of $V$ has a unique representation of the form $wg,$ where $g\in H$ and $w$ belongs to the base subgroup $W,$  and so  $w=\sum_{i\in I}  t_i \circ a_i,$ where $t_i\in \mathbb Z H$ and $t_i=0$ for all but finitely many $i$.
By the Remeslennikov -- Sokolov criterion, if $wg\in \mu_{0}(F),$ then $$\sum_{i\in I} t_i(h_i-1) = g-1$$ in $\mathbb Z H.$ (We use only the easier half of `iff' from \cite{RS}, and for $wg=\mu_{0}(u),$ this equation is directly verified
by the induction on the length of $u.$)

Assume now that  $H$ is a group with a length function $\ell(h)$ and a set of generators $(h_i)_{i\in I}$.  We set $l_i=\max(\ell(h_i), 1)$ and
define a \emph{generalized Magnus homomorphism} from the free group $F$ to $V$ by the formula
$$
\mu (x_i)= a_i^{l_i}h_i, \;\; i\in I.
$$
One can think of the standard Magnus homomorphism as a special case of the generalized one corresponding to the length function $\ell\colon \mathbb N\cup\{ 0\}$ that takes values $1$ on all nontrivial elements of $H$.

Thus $\mu$ is a homomorphism from $F$
to the subgroup isomorphic to the wreath product $A' \wre H,$ where $A'$ is the free abelian subgroup of $A$ with basis $(l_i a_i)_{i\in I}$ (in additive notation). Therefore the Remeslennikov -- Sokolov
property looks now as follows.
Let $w=\sum_{i\in I}  t_i\circ a_i,$ where $t_i\in \mathbb Z H,$ and let $g\in H.$
\bigskip

\noindent(RS)\;\; {\it If $wg\in \mu(F),$ then $t_i = l_is_i$ for some $s_i\in \mathbb Z H$ and  $\sum_{i\in I}  s_i(h_i-1) = g-1$ in $\mathbb Z H.$}

\bigskip

For an element $t=\sum_{h\in H} k_h h$ of $\mathbb Z H$ ($k_h\in \mathbb Z$), we define its \emph{norm} by
$$||t||= \sum_{h\in H} |k_h|.$$ Further for $w=\sum_{i\in I} t_i \circ  a_i\in W,$ we set  $$||w||= \sum_{i\in I} ||t_i||.$$

\begin{lem}\label{Magnus}
If in the above notation $wg\in \mu(F),$ then $\ell(g)\le ||w||.$
\end{lem}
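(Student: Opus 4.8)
The plan is to feed the conclusion of (RS) into the geometry of the Cayley graph of $H$. First I would apply (RS) to write $t_i = l_i s_i$ with $s_i = \sum_{h\in H} k_{i,h}\, h \in \mathbb Z H$ and
$$\sum_{i\in I} s_i(h_i-1) = g-1 \quad\text{in } \mathbb Z H.$$
Since $l_i$ is a positive integer, $\|t_i\| = \|l_i s_i\| = l_i\sum_{h}|k_{i,h}|$, and hence
$$\|w\| = \sum_{i\in I} l_i \sum_{h\in H} |k_{i,h}|.$$
Because $w$ lies in the restricted base $W$, only finitely many $t_i$ are nonzero and each $s_i$ has finite support, so every sum in sight is finite; also, if $g=1$ the claim is trivial as $\ell(1)=0$, so assume $g\neq 1$.

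Next I would read the displayed module identity as a statement about a finite directed multigraph $D$ on the vertex set $H$. Expanding $s_i(h_i-1)=\sum_h k_{i,h}(hh_i-h)$, each term $hh_i-h$ is the boundary of the oriented edge $h\to hh_i$ (right multiplication by $h_i$). Put $k_{i,h}$ parallel copies of this edge into $D$ when $k_{i,h}>0$, and $|k_{i,h}|$ copies of the reversed edge $hh_i\to h$ when $k_{i,h}<0$. By construction $D$ carries a nonnegative integer flow whose signed boundary equals $g-1$, i.e. every vertex has equal in- and out-degree except $1$ (out-excess $1$) and $g$ (in-excess $1$). The combinatorial heart of the argument is then the flow decomposition theorem: a unit flow with these source/sink data decomposes into directed paths from $1$ to $g$ and directed cycles, using each edge of $D$ with total multiplicity at most its multiplicity in $D$. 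Selecting one such simple directed path $1=g_0,g_1,\dots,g_n=g$, each step is right multiplication by some $h_{i_k}^{\pm1}$, so that $g_k^{-1}g_{k+1}=h_{i_k}^{\pm1}$, and the edges of the path form a sub-multiset of the edges of $D$.

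It then remains only to bound $\ell(g)$. Using left-invariance of the metric $\d(x,y)=\ell(x^{-1}y)$ and the triangle inequality (L3),
$$\ell(g)=\d(1,g)\le\sum_{k=0}^{n-1}\d(g_k,g_{k+1})=\sum_{k=0}^{n-1}\ell\bigl(h_{i_k}^{\pm1}\bigr)=\sum_{k=0}^{n-1}\ell(h_{i_k})\le\sum_{k=0}^{n-1} l_{i_k},$$
where I used (L2) together with $\ell(h_i)\le l_i$. Finally, since the path uses each edge of $D$ at most once, the number of its steps carrying the label $i$ is at most the total number of $i$-labelled edges of $D$, namely $\sum_h|k_{i,h}|$; grouping the last sum by label gives $\sum_k l_{i_k}\le\sum_{i} l_i\sum_h|k_{i,h}|=\|w\|$, which is exactly the asserted inequality $\ell(g)\le\|w\|$. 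I expect the only genuinely delicate point to be the extraction of the $1$-to-$g$ path, i.e. making the flow-decomposition (equivalently, Eulerian-trail) step precise; the remaining steps are bookkeeping with the norm and the triangle inequality.
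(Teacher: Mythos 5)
Your proof is correct and follows essentially the same route as the paper: your flow multigraph on $H$ is the paper's cancellation graph with the cancellation (blue) edges contracted --- the paper's red edges are your directed edges $h\to hh_i$, the blue pairing is exactly the flow-conservation that legitimizes your decomposition, and the paper's splitting of the graph into a single arc from the vertex labeled $-1$ to the vertex labeled $g$ plus loops is the path-plus-cycles flow decomposition you invoke. The finishing steps (triangle inequality along the extracted path, then the weight count $\sum_k l_{i_k}\le \|w\|$) coincide with the paper's.
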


\proof We may assume that $g\ne 1$ since otherwise there is nothing to prove.

Every element $s_i$ is a sum of the form $\sum_j \pm g_{ij},$ where $g_{ij}\in H,$
and in this sum, an element of $H$ can occur $|k_{ij}|$ times either with sign $+$ or with sign $-$ (but not with both signs). Therefore  equality (RS) can be rewritten as

\begin{equation} \label{star}
\sum_i(\sum_j \pm g_{ij}) (h_i-1) = g-1
\end{equation}

Note that the right-hand side of (\ref{star}) has only two terms.  We  define a labeled \emph{cancellation graph} $\Gamma$ reflecting the cancellations in the left-hand side.
By definition, $\Gamma$ has vertices $(i,j,1)$ labeled by $\mp g_{ij}$ and the vertices $(i,j,2)$ labeled by $\pm g_{ij}h_i$.

Every vertex $(i,j,1)$ is connected with  $(i,j,2)$ by a red edge.
So every vertex is connected by a red edge with exactly one other vertex.
 Since almost all terms in the left-hand side of (\ref{star}) cancel
out, there is a  pairing on the set of vertices without two vertices $o$ and $o^\prime$ labeled by $-1$ and $g$, respectively, such that one of the vertices in each pair is labeled
by some $x\in H$ and the other is labeled by $-x$. We fix the pairing and connect two vertices of a pair by a blue edge.

\begin{figure}
 \centering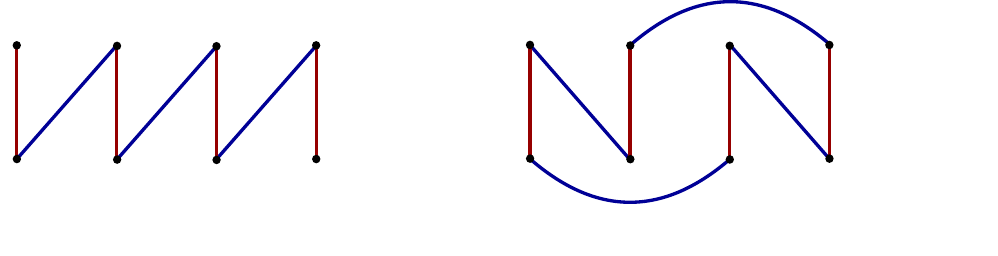\\
  \caption{The cancellation graph for the image of the word $x_1^{-3}x_2^{-1}x_1x_2x_1^3x_2x_1x_2$ under the standard Magnus homomorphism.}\label{fig1}
\end{figure}

As an example, consider the free abelian group $H$ with basis $\{ h_1,h_2\}$. For simplicity take the standard word length $\ell$ on $H$ with respect to the generators $\{ h_1,h_2\}$. In this case our generalized Magnus homomorphism $\mu \colon F\to A\wre H$, where $F=F(x_1, x_2)$ is the free group with basis $\{ x_1, x_2\} $ and $A$ is the free abelian group with basis $\{ a_1,a_2\}$, coincides  with the standard Magnus homomorphism defined by $\mu (x_i)=a_ih_i$ for $i=1,2$. Let $$f=x_1^{-3}x_2^{-1}x_1x_2x_1^3x_2x_1x_2$$ Then it is straightforward to compute
$$
\mu (f)= \big((h_1h_2+h_1^{-3}h_2^{-1} - h_1^{-3} +1)\circ a_1  + (h_1^2h_2 + h_1 +h_1^{-2} h_2^{-1} - h_1^{-3}h_2^{-1})\circ a_2\big) h_1^2h_2^2.
$$
Then (\ref{star}) takes the form
$$
\big(h_1h_2+h_1^{-3}h_2^{-1} - h_1^{-3} +1\big)(h_1-1)  +  \big(h_1^2h_2 + h_1 +h_1^{-2} h_2^{-1} - h_1^{-3}h_2^{-1}\big) (h_2-1) = h_1^2h_2^2  -  1.
$$
The corresponding cancellation graph is drawn on Fig. \ref{fig1}.

Obviously every vertex of $\Gamma$ belongs to exactly one red edge, and every vertex except for $o, o^\prime,$ belongs to exactly one blue edge, i.e., $o$ and $ o^\prime$ have degree $1$ while other vertices have degree $2.$
It follows that  $\Gamma$ decomposes into connected components $\Gamma_0,$ $\Gamma_1, \dots,$
where $\Gamma_0$ is a simple arc connecting $o$ and $o^\prime$ and other components are simple loops.
The red edges and the blue ones must alternate in the directed path $o-o^\prime=p=e_0f_1e_1\dots, f_d e_d.$
Moreover $e_0$ and $e_d$ (and so all edges $e_i$) are red since no blue edges start/end in $o$ or in $o^\prime.$

Note that $||s_i||$ is equal to the number of vertices of the form $(i, j,1)$ (with various indices~$j$) in $\Gamma,$ and $||w||=\sum_i ||t_i||=\sum_i l_i||s_i||.$ Therefore to obtain $||w||,$
we may assign the weight $l_i$ to every vertex $(i,j,1)$ and sum these weights. Hence to bound
$||w||$ {\it below}, it suffices to sum the assigned weights of the vertices only along the path $p$.
We have $||w||\ge \sum_k l_{i_k}$, over all vertices of the form $(i_k,j_k,1)$ passed by $p$. Since every
red edge $e$ of $p$ connects some $(i,j,1)$ and $(i,j,2),$ we can assign the weight $l_i$ to such a red edge $e$.
So
\begin{equation}\label{2stars}
 ||w|| \ge \sum_{k=0}^d l_{i_k},
 \end{equation}
where $l_{i_k}=\max(\ell(h_{i_k}), 1)$ is the weight of $e_k.$

Let $-z_{2k}$ and $z_{2k+1}$ be the labels of the initial and the terminal vertices  of the red edge $e_k$ in $p.$
Using the definition of red and blue edges,  we see for $k>0,$ that  $z_{2k+1} = -z_{2k}h_{i_k}^{\pm 1} = z_{2k-1}h_{i_k}^{\pm 1}.$ It follows that
 $\ell(z_{2k+1})\le \ell(z_{2k-1})+ \ell(h_{i_k}).$ Likewise $\ell(z_1)\le \ell(z_0)+\ell(h_{i_0}).$
Hence by induction, $$\ell(g)=\ell(z_{2d+1})\le \ell(z_0)+\sum_{k=0}^d \ell(h_{i_k})\le 0+\sum_{k=0}^d l_{i_k}$$
since $\ell(z_0)=\ell(1)=0.$ This inequality and (\ref{2stars}) prove the lemma.
\endproof

\begin{cor}\label{Mcor}
Let $H$ be a group with a (finite or infinite) set of generators $X=(h_i)_{i\in I}$
and let $\mu$ be the Magnus homomorphism $x_i\mapsto a_ih_i$ ($i\in I$) from the free group $F=F(X)$
to the wreath product $V=A \wre H$ of a free abelian group $A$ with basis $(a_i)_{i\in I}$ and
$H.$ If $\mu(f)=wg,$ where $f\in F$, $g\in H,$ and $w$ belongs to the base subgroup of  $V,$ then

(a) $|g|_X\le ||w||;$

(b) $|g|_X = ||w||$ if and only if the canonical images of $f$ in $H\simeq F/N$ and in $F/[N,N]$ have equal lengths with respect to
$X$; here $N$ is the kernel of the homomorphism $\epsilon\colon x_i\mapsto h_i$.
\end{cor}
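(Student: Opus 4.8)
The plan is to derive both parts from Lemma \ref{Magnus}, specialized to the length function $\ell=|\cdot|_X$, the word metric of $H$ with respect to $X$. Since $|h_i|_X\le 1$ for every generator, the associated weights are $l_i=\max(|h_i|_X,1)=1$, so the generalized Magnus homomorphism coincides with the standard one $x_i\mapsto a_ih_i$, and $\ell(g)=|g|_X$. With this choice, part (a) is immediate: Lemma \ref{Magnus} gives $|g|_X=\ell(g)\le\|w\|$ for any $wg\in\mu(F)$.

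For part (b) the idea is to insert $\|w\|$ into a chain of inequalities between the two lengths being compared. Write $\bar f$ for the image of $f$ in $F/[N,N]$ and $|\bar f|_X$ for its length with respect to the images of $X$; recall that the image of $f$ in $F/N\cong H$ is $g$, of length $|g|_X$, and that $\ker\mu=[N,N]$ by Magnus's theorem. I claim
\[
|g|_X\le\|w\|\le|\bar f|_X .
\]
The left inequality is part (a). For the right one, pick a geodesic representative $f'$ of $\bar f$, so that $\mu(f')=\mu(f)=wg$ and $|f'|_X=|\bar f|_X$; applying $\mu$ to $f'$ letter by letter expresses $w$ as a sum of $|f'|_X$ terms of the form $\pm(h\circ a_i)$, each of norm $1$, whence $\|w\|\le|f'|_X$ by the triangle inequality for $\|\cdot\|$. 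Granting the chain, one implication of (b) is a squeeze: if $|g|_X=|\bar f|_X$, then all three quantities coincide, so in particular $|g|_X=\|w\|$.

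The substance is the converse: assuming $|g|_X=\|w\|$, I must show $|\bar f|_X\le\|w\|$, i.e. produce a word of length $\|w\|$ representing $wg$. Here I would return to the cancellation graph $\Gamma$ of Lemma \ref{Magnus}. Since $l_i=1$, the quantity $\|w\|=\sum_i\|s_i\|$ equals the total number of vertices of type $(i,j,1)$, while the arc $\Gamma_0=p=e_0f_1e_1\cdots f_de_d$ accounts for exactly $d+1$ of them; thus the inequalities in the proof of the lemma read $|g|_X\le d+1\le\|w\|$. The hypothesis $|g|_X=\|w\|$ forces equality throughout, and in particular $d+1=\|w\|$, which means that $\Gamma$ has no loop components, i.e. $\Gamma=\Gamma_0$, so every deposit recorded by $w$ lies on the single arc $p$. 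Reading off the generators labelling the red edges $e_0,\dots,e_d$ then yields a word $f''$ of length $d+1=\|w\|$ whose image in $H$ is $g$ (by the same telescoping from $z_0=1$ to $z_{2d+1}=g$ used in the lemma) and whose deposits, one per red edge, reproduce $w$ without cancellation precisely because no $(i,j,1)$-vertex lies off $p$. Hence $\mu(f'')=wg$, so $|\bar f|_X\le\|w\|=|g|_X$, and with the chain above this gives $|\bar f|_X=|g|_X$, as required.

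The main obstacle is this last verification, that the word $f''$ read off from the surviving arc satisfies $\mu(f'')=wg$: one must check that, once $\Gamma=\Gamma_0$, the labelled path data encodes exactly the base element $w$ (each red edge $e_k$ contributing a single surviving deposit $\pm(z\circ a_{i_k})$ at the position given by its $(i,j,1)$-endpoint, with no further cancellation) together with the correct $H$-component $g$. This is a bookkeeping argument tracing through the definitions of the red and blue edges and the deposit rule for $\mu$, and it is exactly where the ``no loops'' conclusion is used.
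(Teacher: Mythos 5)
Your argument is correct and is essentially the paper's own: part (a) by specializing Lemma \ref{Magnus} to $\ell=|\cdot|_X$ (so all weights $l_i=1$), the easy direction of (b) by the letter-by-letter observation that each factor $(a_ih_i)^{\pm 1}$ changes the norm by at most $1$, and the hard direction by the same counting squeeze $|g|_X\le d+1\le \|w\|$ in the cancellation graph, which under the hypothesis $|g|_X=\|w\|$ forces every red edge onto the arc $\Gamma_0$. The only divergence is the final extraction step: you read the whole geodesic word $f''$ off the arc and verify $\mu(f'')=wg$ in one pass (the bookkeeping you defer does go through --- each red edge's deposit equals the corresponding summand of $w$, with the signs forced by the alternation of label signs along $p$), whereas the paper peels off the last red edge (its Cases (1) and (2)) and inducts on $\|w\|$, so that only a single cancellation must be checked at each step; these are routine variants of the same idea, and your direct version even avoids the paper's intermediate claim that each red edge increases the label length by exactly $1$.
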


\proof (a) The function $\ell(h) = |h|_X$ is a length function, and $\max(\ell(h_i),1)=1$ for every
$h_i\in X.$ So the generalized Magnus homomorphism $\mu$ from Lemma \ref{Magnus} is just the standard Magnus
homomorphism in this case. Hence the statement follows from Lemma \ref{Magnus}.

(b) Let $r$ be the length of $f$ in $F/[N,N],$ that is, by Magnus' theorem, the length of $\mu(f)$ in $\mu(F)$ with
respect to $(a_ih_i)_{i\in I}$.
Thus $\mu(f)= y_1\dots y_r$, where each $y_j$ is of the form $(a_ih_i)^{\pm 1}.$
The element $g$ is the $\epsilon$-image of $f$ in $H$.

Assume first that $r = |g|_X.$ We want to prove the equality  $|g|_X = ||w||,$ and by statement (a), it
suffices to prove the inequality $||w||\le r$. So it sufficient to show by induction that if $y_1\dots y_{j-1}=w'g'$
with $g'\in H$, $w'\in V$, and $||w'||\le j-1,$ then $y_1\dots y_{j-1}y_j=w''g''$ with $||w''||\le j$.
But $w'g'(a_jh_j)^{\pm 1} = w' k a_j^{\pm 1}k^{-1} (g'(h_j)^{\pm 1}),$ where $k=g'$ or $k=g'h_j^{-1}.$
Therefore $||w''||\le ||w'||+ ||k a_j^{\pm 1}k^{-1}||\le j-1 +1 =j$, as required.

Now we assume that $|g|_X = ||w||\ge 1$ and will use the notation and arguments of Lemma \ref{Magnus} (but with $l_i=1$ and $s_i=t_i$).
The number of red edges in the component $\Gamma_0$ is at most $r=|g|_X$ since $||w||$ is the number of
red edges in the entire $\Gamma$. On the other hand, $r$ is at least the number of red edges in $\Gamma_0$
since the difference of the lengths of $g_{ij}$ and $g_{ij}h_i$ labeling (with signs) the ends of
a red edge differ by at most $1,$ and such a difference is $0$ for blue edges of the path $p$ connecting
the vertices labeled by $1$ and $g.$ Hence $r=d+1$ and passing  every red edge $e_j$ directed in $p$ from $1$ to
$g$ we increase the length of the vertex label exactly by $1$. (Moreover, the graph $\Gamma$ is connected.) Let the vertices of $e_d$ be labeled by
$\mp g_{ij}$ and by $\pm g_{ij}h_i$ for some $i,j$, where $\pm g_{ij}$ is a summand of $s_i$. So there are two cases: either (1) $g=g_{ij}$ and $|g_{ij}h_i|_X=r-1$ or
(2) $g= g_{ij}h_i$ and $|g_{ij}|_X=r-1$.

{\bf Case (1)} Let $g'= g_{ij}h_i$ and $w'g'= (wg)(a_ih_i),$ i.e., $w'=w(ga_ig^{-1})$.
We can say using module notation that passing from $w$ to $w'$ we add the summand $g=g_{ij}$ to
$s_i$. Since it cancels with  the summand $-g_{ij}$ of $s_i,$ we have $||w'||=||w||-1=r-1.$
Thus $||w'||=|g'|_X=r-1,$ and by induction, the length of $w'g'$ in the generators
$(a_kh_k)_{k\in I}$ is equal to $r-1.$ So the length of $wg$ in these generators is $\le r-1+1$, as required.

{\bf Case (2)} Let $g'= g_{ij}$ and $w'g'= (wg)(a_ih_i)^{-1}.$ Now $w'= w(g_{ij}a_i^{-1}g_{ij}^{-1}),$ i.e.,
we add $-g_{ij}$ to cancel it with the summand $g_{ij}$ of $s_i$ and complete the proof as in Case 1.
\endproof

\paragraph{Parallelogram-free subsets in metabelian groups.}
The following concept is important for describing the algebraic structure of the finitely generated group from Theorem \ref{main}.

\begin{defn}\label{parafree}
Let $u_1, u_2, u_3, u_4$ be elements of a group $G$ such that $u_i\ne u_{i+1},$ for arbitrary
$i$ taken modulo $4.$ We say that the configuration $(u_1,u_2,u_3,u_4)$ is a {\it parallelogram}
if
\begin{equation}\label{para}
u_1u_2^{-1}u_3u_4^{-1}=1
\end{equation}
in $G$. A subset $P$ of a group $G$ is  {\emph{parallelogram-free}} if it contains no parallelograms.
\end{defn}

\begin{lem}\label{sh} Let $G$ be a group, $P$ a subset of $G$.
\begin{enumerate}
\item[(1)] For every $g\in G$, if $P$ is parallelogram-free so are the sets  $gP$ and $Pg$.

\item[(2)] The following properties are equivalent:

\begin{enumerate}
\item[(a)] $P$ is parallelogram-free.

\item[(b)] $\#\{P\cap gP\}\le 1$ for every nontrivial $g\in G$.

\item[(c)] $\#\{P\cap Pg\}\le 1$ for every nontrivial $g\in G$.
\end{enumerate}
\end{enumerate}
\end{lem}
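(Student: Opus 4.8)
The plan is to prove part (1) first, since it is purely formal, and then to establish the equivalences in part (2) by unwinding the definition of parallelogram and translating it into the language of coset intersections.

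For part (1), I would argue directly from Definition \ref{parafree}. Suppose $gP$ contains a parallelogram, so there exist $u_1, u_2, u_3, u_4 \in P$ with $gu_i \ne gu_{i+1}$ (indices mod $4$) and $(gu_1)(gu_2)^{-1}(gu_3)(gu_4)^{-1} = 1$. The left-multiplications by $g$ and right by $g^{-1}$ partially telescope: $(gu_1)(gu_2)^{-1} = g u_1 u_2^{-1} g^{-1}$, and similarly for the third/fourth factor, so the product equals $g\,(u_1 u_2^{-1} u_3 u_4^{-1})\,g^{-1}$. Hence $u_1u_2^{-1}u_3u_4^{-1} = 1$, and since $gu_i \ne gu_{i+1}$ forces $u_i \ne u_{i+1}$, the $u_i$ form a parallelogram in $P$, contradicting that $P$ is parallelogram-free. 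The case of $Pg$ is entirely analogous; here the conjugation that appears is on the other side but the same telescoping identity holds. The only mild subtlety is bookkeeping the non-degeneracy conditions $u_i \ne u_{i+1}$, which transfer cleanly under left/right translation because those are injective maps.

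For part (2), I would prove the cycle of implications (a)$\Rightarrow$(b)$\Rightarrow$(a) and (a)$\Leftrightarrow$(c), or equivalently show that (b) and (c) each reformulate the parallelogram condition. The key translation is this: a parallelogram $u_1u_2^{-1}u_3u_4^{-1}=1$ with $u_i \ne u_{i+1}$ can be rewritten, setting $g = u_1 u_2^{-1} = u_4 u_3^{-1}$ (read off from the relation as $u_1u_2^{-1} = u_4 u_3^{-1}$). Then $u_1 = g u_2$ and $u_4 = g u_3$, so both $u_2$ and $u_3$ lie in $P \cap g^{-1}P$ (equivalently $u_1, u_4 \in P \cap gP$). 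The non-degeneracy $u_1 \ne u_2$ guarantees $g \ne 1$, and $u_2 \ne u_3$ guarantees the two witnesses in $P \cap g^{-1}P$ are \emph{distinct}, so $\#\{P \cap g^{-1}P\} \ge 2$. Conversely, if some nontrivial $g$ gives two distinct elements $p, q \in P \cap gP$, then $p = g p'$ and $q = g q'$ with $p', q' \in P$ distinct; I would set $(u_1, u_2, u_3, u_4) = (p, p', q', q)$ and verify $u_1 u_2^{-1} u_3 u_4^{-1} = (gp')(p')^{-1}(q')(gq')^{-1} = g \cdot g^{-1} = 1$, checking the four inequalities $u_i \ne u_{i+1}$ from $g \ne 1$ and $p \ne q$. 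This shows (a) fails iff (b) fails, giving (a)$\Leftrightarrow$(b).

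Finally, (a)$\Leftrightarrow$(c) follows by the same argument applied to the right-translated configuration, using part (1) to symmetrize, or directly by factoring the parallelogram relation as $u_2^{-1}u_3 = u_1^{-1}u_4$ and reading off right cosets. I expect the \textbf{main obstacle} to be purely one of careful case-analysis rather than any genuine difficulty: one must match the cyclic non-degeneracy conditions $u_i \ne u_{i+1}$ exactly to the two requirements ``$g \ne 1$'' and ``the two coset-intersection witnesses are distinct,'' making sure no degenerate configuration slips through and that the correspondence between parallelograms and pairs $(g, \{p,q\})$ respects these constraints in both directions. Once the dictionary $u_1 = gu_2,\ u_4 = gu_3$ is set up correctly, the verification of the relation \eqref{para} is a one-line computation.
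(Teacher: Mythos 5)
Your proposal is correct, and at the top level it follows the same strategy as the paper's proof: part (1) comes from the invariance of the defining relation (\ref{para}) under simultaneous left or right translation of all four elements, and part (2) comes from a dictionary between parallelograms in $P$ and pairs of distinct elements of $P\cap gP$; indeed, the paper's configuration $(u_1,\,g^{-1}u_1,\,g^{-1}u_2,\,u_2)$ for the converse direction is exactly your $(p,p',q',q)$. There is, however, one genuine difference, and it is in your favor. In the direction ``parallelogram $\Rightarrow$ two common elements of $P$ and a translate,'' the paper sets $g=u_1u_4^{-1}$ and asserts $gu_3=u_1u_4^{-1}u_3=u_2$ ``by (\ref{para})''; but (\ref{para}) only yields the cyclic conjugate identity $u_3u_4^{-1}u_1=u_2$, and $u_1u_4^{-1}u_3=u_2$ can fail when $G$ is non-abelian. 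For instance, in a free group put $u_1=u_4u_3^{-1}u_2$, so that (\ref{para}) and all four non-degeneracy conditions hold; then $u_1u_4^{-1}u_3=u_4u_3^{-1}u_2u_4^{-1}u_3\ne u_2$ as reduced words. Your choice $g=u_1u_2^{-1}=u_4u_3^{-1}$ (the factorization that really does follow from (\ref{para})), with witnesses $u_2,u_3\in P\cap g^{-1}P$ distinct because $u_2\ne u_3$, and $g\ne 1$ because $u_1\ne u_2$, is the computation that works in an arbitrary group; so your write-up repairs this slip rather than reproducing it. Your sketch of (a)$\Leftrightarrow$(c) via the factorization $u_2^{-1}u_3=u_1^{-1}u_4$ is sound for the same reason. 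One cosmetic remark: for the right translate $Pg$ in part (1) no conjugation appears at all, since $(u_1g)(u_2g)^{-1}(u_3g)(u_4g)^{-1}=u_1u_2^{-1}u_3u_4^{-1}$ exactly; this only makes that case easier than you suggest.
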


\proof Claim (1) is true since the condition (\ref{para}) is invariant under simultaneous  multiplication of all the elements $u_1,\dots, u_4$  by $g$ from the left or from the right.

To prove (2) we note that  if $(u_1,u_2,u_3,u_4)$ is a parallelogram in $P$, then two distinct elements $u_1$ and $u_2$ belong to both $P$ and $gP$ for non-trivial $g=u_1u_4^{-1}$ since $gu_4=u_1$ and $gu_3=u_1u_4^{-1}u_3=u_2$ by (\ref{para}). Therefore (b) implies (a).
Conversely, (a) implies (b) since given two distinct elements $u_1$ and $u_2$ in  $P\cap gP,$
we have the parallelogram $(u_1, g^{-1}u_1, g^{-1}u_2,u_2)$ in $P$.
Similarly we obtain the equivalence of (a) and (c).
\endproof

\begin{defn}\label{nice1} Recall that a subset $P$ of a group $M$ with a finite set of generators $S$
has exponential growth if
 there exist constants $c>1$ and $\lambda>0$ such that for every $n\in \mathbb N$,
$$\# \{ w\in P\mid \lambda |w|_S < n+1\}\ge c^n$$
\end{defn}

\begin{lem}\label{S}
There exists a  finitely generated metabelian group $M$ with a parallelogram-free subset $P$
of exponential growth.
\end{lem}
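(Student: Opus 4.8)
The plan is to realize the required subset inside a lamplighter-type group and to reduce parallelogram-freeness to a perfect-difference (Sidon) condition. Concretely, I would take $M=\mathbb Z_p\wre \mathbb Z$ for an odd prime $p$; this is finitely generated (by the lamp $a$ of order $p$ and the shift $t$) and metabelian, its base $B=\bigoplus_{\mathbb Z}\mathbb F_p$ being abelian with quotient $\mathbb Z$. I would look for $P$ of the form $P=\{(f,0)\mid f\in\Sigma\}$, where all elements sit at cursor $0$ and $\Sigma\subseteq B$. Since the elements share the cursor $0$, every quotient $(f,0)(f',0)^{-1}=(f-f',0)$ lands in the abelian base, and by Lemma \ref{sh}(2) parallelogram-freeness of $P$ is equivalent to $\#\{f\in\Sigma\mid f-h\in\Sigma\}\le 1$ for every $0\ne h\in B$; that is, $\Sigma$ must have all pairwise differences distinct as ordered pairs. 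Here it is essential that $p$ be odd: in characteristic $2$ one has $(ab^{-1})^2=1$, so $(a,b,a,b)$ is a parallelogram for any $a\ne b$ in an abelian $2$-subgroup, and no $2$-element subset of $B$ could be parallelogram-free.

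To produce a Sidon set $\Sigma$ that is simultaneously exponentially large and concentrated on few lamp coordinates, I would use the graph of a planar (perfectly nonlinear) function. Let $K=\bigcup_i\mathbb F_{p^{2^i}}$ be the union of the tower of finite fields $\mathbb F_{p^{2^i}}$, an infinite field of characteristic $p$, and set $\Sigma=\{(x,x^2)\mid x\in K\}\subseteq K\oplus K$. For $a\ne 0$ the difference operator $D_a(x)=(x+a)^2-x^2=2ax+a^2$ is a bijection of $K$ (as $2a\ne 0$), so for any target $(a,b)$ with $a\ne 0$ the system $x-y=a$, $x^2-y^2=b$ has a unique solution $(x,y)$; hence every difference of two elements of $\Sigma$ with nonzero first coordinate is attained exactly once, while differences with vanishing first coordinate do not occur. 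Translating through the computation of the previous paragraph, this says precisely that $\Sigma$ is a perfect-difference set, and it rules out parallelograms: if $f_1-f_2=f_4-f_3$ with $f_1\ne f_2$, uniqueness forces $f_1=f_4$ and $f_2=f_3$, contradicting $u_2\ne u_3$. Finally I would fix an $\mathbb F_p$-basis $(e_j)$ of $K$ adapted to the filtration, so that $\mathbb F_{p^{2^i}}$ is spanned by $e_1,\dots,e_{2^i}$, and embed $K\oplus K$ into $B$ by placing the two copies on the even and odd positions; then $(x,x^2)$ with $x\in\mathbb F_{p^{2^i}}$ is supported on the coordinates $0,1,\dots,2^{i+1}-1$, with all lamp values in $\{0,\dots,p-1\}$.

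It remains to check the growth condition of Definition \ref{nice1}. An element $(f,0)$ with support in $[0,L)$ and bounded lamp values can be spelled out by sweeping the cursor once across $[0,L)$, toggling as it goes, and returning, so its word length in $\{a,t\}$ is at most $(p+1)L$. The $i$-th level of $\Sigma$ therefore contributes $p^{2^i}$ elements of word length at most $C\,2^i$ with $C=2(p+1)$, and since the scales $2^i$ grow only geometrically they fill every radius up to a bounded factor: for every $R$ one finds a level with $p^{2^i}$ elements inside the ball of radius $R$ and $2^i\ge R/(2C)$, giving $\#\{w\in P\mid |w|\le R\}\ge p^{R/(2C)}$. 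Choosing $\lambda=1/C$ and $c=\sqrt p$ then yields $\#\{w\in P\mid \lambda|w|<n+1\}\ge c^n$ for all $n$, as required. The main obstacle, and the crux of the argument, is the second paragraph: one needs a Sidon set that is genuinely exponential yet packed into a linear number of bounded-value lamp coordinates (a sparse or large-valued Sidon set would only give polynomial growth), and the graph of a planar function over a tower of finite fields of odd characteristic is exactly what delivers this while sidestepping the characteristic-$2$ obstruction.
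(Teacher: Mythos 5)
Your proof is correct, and it takes a genuinely different route from the paper's. The paper also works in a lamplighter-type group, namely $M=\mathbb Z\wre\mathbb Z$, but constructs $P$ greedily rather than explicitly: $P=\bigcup_{r}P_r$, where $P_r$ is a maximal parallelogram-free extension of $P_{r-1}$ inside the ball $B_r$. The engine is a counting argument: if $N\subseteq B_r$ is parallelogram-free with $\#N=n\le\frac12 k^{r/3}$ (where $\#B_r\ge k^r$), then an element $x\in B_r$ fails to extend $N$ only if it satisfies $x=u$, or $xu^{-1}vw^{-1}=1$, or $xu^{-1}xv^{-1}=1$ with $u,v,w\in N$, and these equations have at most $n+n^3+n^2<k^r$ solutions in total; the only group-theoretic input is that $y^2=a$ has at most one solution in $\mathbb Z\wre\mathbb Z$, because the base is a torsion-free module over the group ring $\mathbb Z[\mathbb Z]$. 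So the paper's argument is soft: it converts exponential growth of the ambient group into exponential growth of $P$ (losing a cube root in the exponent) and requires no combinatorial design, but it produces no explicit $P$, and its $P$ is spread through the whole group rather than confined to an abelian subgroup. Your approach buys explicitness — a concrete Sidon set, the graph of the planar function $x\mapsto x^2$ over a tower of finite fields, packed with linear support and bounded lamp values into the abelian base, which is exactly the tension you correctly identify as the crux. There is also a pleasing inversion in how the two proofs kill the degenerate parallelograms with $u_1=u_3$: the paper handles its equation $xu^{-1}xv^{-1}=1$ via uniqueness of square roots (torsion-freeness of the base), while you handle it via planarity of $x\mapsto x^2$ (which forces odd torsion), and your observation that $(a,b,a,b)$ is a parallelogram for any $a\ne b$ in an abelian group of exponent $2$ shows that these really are two distinct ways around the same obstruction.
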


\begin{proof}
Take $M=\mathbb Z \wre \mathbb Z$ and $S=\{x_0,y_0\}$, where $x_0$ and $y_0$ generate the wreathed infinite cyclic groups. It is well known and easy to prove that the group $M$ has exponential growth: every ball $B_r$ of radius $r\ge 0$ centered at $1$ has at least $k^r$ elements for some $k>1.$ We will construct $P$ as $P=\bigcup_{r=0}^{\infty}P_r$, where $P_0=\{1\}$ and for $r>0$, $P_r$ is a maximal  parallelogram-free extension of $P_{r-1}$ in $B_r$.
To prove the lemma, it suffices to show that $\#P_r> \frac12 k^{r/3}$ for every $r\ge 0.$
In turn, it will be sufficient to prove that if $N$ is a parallelogram-free subset of $B_r$
and $n=\# N  \le \frac12 k^{r/3},$ then for some $x\in B_r\backslash N$, the subset
$N'=N\cup\{x\}$ is also parallelogram-free.


We will require that $x$ satisfies none of the  equations (\ref{e1}--\ref{e3}) below.
\begin{equation}\label{e1}
 x = u, \;\; {\rm where} \;\; u\in N.
 \end{equation}
   The total number of solutions of all such equations is $n$.

\begin{equation}\label{e2}
  xu^{-1}vw^{-1}=1, \;\; {\rm where} \;\;u,v,w\in N.
\end{equation}
The total number of solutions is at most $n^3$.
\begin{equation}\label{e3}
xu^{-1}xv^{-1}=1,\;\; {\rm where} \;\;u,v\in N.
\end{equation}
Equation (\ref{e3}) is equivalent to
the equation $(xu^{-1})^2= vu^{-1}.$ To prove that the total number of solutions of all such equations is at most $n^2,$ it suffices to prove that any equation of the form  $y^2=a$ has at most one
solution in $M$.  Indeed, let $(w_1g)^2 = (w_2h)^2$ where $g,h$ belong to the active infinite cyclic group
and $w_1, w_2$ belong to the base subgroup of the wreath product $M.$ Then we immediately have $g^2=h^2,$
and so $g=h.$ Now we obtain  $(1+g) \circ w_1= (1+g) \circ w_2$ in the module notation. But the free module over the
group ring of an infinite cyclic group has no module torsion, whence $w_1=w_2.$

Now since $$k^r > \frac12 k^{r/3} + \frac14 k^{2r/3} + \frac{1}{8} k^{r}\ge n+n^2+n^3,$$ there is $x\in B_r$ such that
$x$ does not satisfy any of the equations (\ref{e1}--\ref{e3}). Note  that the
set $N'=N\cup\{x\}$ is bigger than $N$ since $x$ satisfies no equation of the form (\ref{e1}).
The set $N'$ is parallelogram-free because $N$ is such, and $x$ is not a solution of any of the equations
(\ref{e2}-\ref{e3}).
This completes the proof of the lemma.
\end{proof}

\section{Proof of the main theorem}

\paragraph{The construction.}
Throughout this section we use the notation $[x,y]=xyx^{-1}y^{-1}$. We start by describing our main construction, which consists of three steps, and then prove the theorem. The notation introduced below remains valid until the end of this section.

Let again $H$ be a group with a length function $\ell\colon H\to \mathbb N\cup \{0\}$. Let $A$ be the free abelian group with basis
$\{a_h\mid h\in H\backslash\{1\}\}$ and let $$Y = \{ a_h,\,  a_h^{\ell(h)}h \mid h\in H\backslash\{1\}\} \subseteq A \wre H.$$  Obviously $Y$ generates the wreath product
\begin{equation}\label{defV}
V=A \wre H.
\end{equation}

Further consider the unrestricted wreath product $V \Wre \mathbb Z$, where $V$ is defined by (\ref{defV}). Let $U$ be the union of sets $U_1$ and $U_2$ defined as follows:
$U_1$ consists of all functions $f_{1,h}$ from the base of $V \Wr \mathbb Z$
 such that
$$
f_{1,h}(n)=\left\{\begin{array}{cl}
                1, & {\rm if \;} n\le 0 \\
                a_h^{l(h)}h, & {\rm if \;} n> 0
              \end{array}\right.
$$
and $U_2$ consists of all functions $f_{2,h}$ such that
$$
f_{2,h}(n)=\left\{\begin{array}{cl}
                1, & {\rm if \;} n\le 0 \\
                a_h, & {\rm if \;} n> 0 .
              \end{array}\right.
$$

Let $t$ be a generator of $\mathbb Z$. For definiteness let $t=1$.
Let $$K\le V \Wr \mathbb Z$$ be the subgroup generated by $Z=U\cup\{t\}$.

Note that $[t, f_{1,h}]=tf_{1,h}t^{-1} f_{1,h}^{-1}$, considered as a function $\mathbb Z\to V$, takes only one nontrivial value $a_h^{\ell(h)}h$ at $0$. Similarly, $[t, f_{2,h}]$ takes only one nontrivial
value $a_h$ at $0.$ In what follows, these commutators
will be identified with the corresponding elements of $Y$. Thus we have a sequence of embeddings  $$H\le V\le K.$$

Finally let $M$ be a metabelian group with a subset $P$ provided by Lemma \ref{S}, $S$ a finite generating set of $M$.  By Lemma \ref{sh} (1), we may shift $P$ and assume that $1\in P.$ Decreasing the constant $\lambda$ in Definition \ref{nice1} if necessary, we can assume that $c\ge \max\{ a, 3\}$, where $a$ and $c$ are the constants from (\ref{ex}) and Definition \ref{nice1}, respectively.
 Thus there exists a subset $P_0\subseteq P$ such that $1\in P_0$ and for every $n\ge 0,$
$$
\# \{ w\in P_0\mid \lambda |w|_S < n+1\} =2\# \{ h\in H\backslash\{1\}\mid \ell(h)\le n\}+1 .
$$
We list all elements of $P_0=\{ 1, w_1, w_2, \ldots \}$ and $U\cup\{1\}=\{1, u_1,u_2, \ldots \}$ in such a way that
\begin{equation}\label{hiwi}
\lambda |w_i|_S< \ell(h)+1
\end{equation}
if $u_i= a_{h}^{\ell(h)}h$ or $u_i= a_{h}.$
Let $B$ denote the base of the wreath product $K\, {\rm Wr}\, M$. Let $g\colon M\to K$ be the element of $B$ such that
$$
g(x)=
\left\{\begin{array}{cl}
         t, & {\rm if \; } x=1, \\
         u_i, & {\rm if \; } x=w_i \; {\rm for \; some\;} i\in \mathbb N, \\
         1, & {\rm if \; } x\notin P_0.
       \end{array}\right.
$$
Let $$G\le K\, {\rm Wr}\, M$$ be the subgroup generated by the finite set $X=S\cup\{ g\}$.

\paragraph{The proof.}
To prove Theorem \ref{main} we will need few preliminary results. The first one is a corollary of Lemma \ref{Magnus}.

\begin{cor} \label{l}   For every non-trivial element $h\in H\le V,$ we have $|h|_Y=\ell(h)+1.$
\end{cor}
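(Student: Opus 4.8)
The goal is to show $|h|_Y = \ell(h)+1$ for every nontrivial $h \in H \le V$, where $Y = \{a_h,\, a_h^{\ell(h)}h \mid h \in H\setminus\{1\}\}$ generates $V = A \wr H$. The plan is to prove the two inequalities $|h|_Y \le \ell(h)+1$ and $|h|_Y \ge \ell(h)+1$ separately, with the lower bound being where Lemma~\ref{Magnus} does the real work.

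\emph{The upper bound.} First I would exhibit a word of length $\ell(h)+1$ in $Y^{\pm 1}$ representing $h$. The natural idea: if $\ell(h) \ge 1$, then the element $a_h^{\ell(h)}h$ is one of the generators in $Y$, and $a_h$ is another; since $a_h$ is abelian and commutes with itself, the product $(a_h^{\ell(h)}h) \cdot a_h^{-\ell(h)}$ should collapse the base-group contribution $a_h^{\ell(h)}$ and leave exactly $h$. Concretely, $a_h^{-\ell(h)}(a_h^{\ell(h)}h) = h$ (being careful with the $H$-action: since $a_h^{-\ell(h)}$ sits at the identity coset of the base, conjugating or not by $h$ must be tracked, but as $A$ is abelian and the $a_h$ are a free basis this is a routine check). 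This uses $\ell(h)+1$ generators: one copy of $a_h^{\ell(h)}h$ and $\ell(h)$ copies of $a_h^{-1}$. Hence $|h|_Y \le \ell(h)+1$.

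\emph{The lower bound via the Magnus lemma.} This is the crux and the main obstacle. I would realize the generating set $Y$ as the image of a Magnus homomorphism so that Lemma~\ref{Magnus} applies. Taking $F$ to be the free group on symbols $\{x_h, b_h\}$ mapping onto the generators $a_h^{\ell(h)}h$ and $a_h$ respectively, the element $h \in H$ is the image of some word $f \in F$; writing its image in $V$ as $wg$ with $g = h$ in the bottom group $H$ and $w$ in the base $W$, Lemma~\ref{Magnus} gives $\ell(h) = \ell(g) \le \|w\|$. The delicate point is to convert a geodesic $Y$-word for $h$ of length $|h|_Y$ into a bound on $\|w\|$ that beats $\ell(h)$ by the right margin: each generator of the form $a_h^{\ell(h)}h$ contributes a factor $a_h^{\ell(h)}$ (with $\|\cdot\|$-norm $\ell(h) = l_h$, matching the weights $l_i$ in the lemma), while each $a_h$-generator contributes norm $1$. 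So the Magnus bound yields $\ell(h) \le \|w\| \le \sum (\text{contributions})$, and I must argue that a word realizing $\|w\| = \ell(h)$ cannot also produce the group element $h$ without spending at least one additional generator — this accounts for the "$+1$." The careful bookkeeping here is identifying exactly why the base element $w$ is forced to be nonzero (so that the word is genuinely longer than a pure base element), which is what separates $\le \ell(h)+1$ from $\le \ell(h)$.

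\emph{Combining.} Putting the two halves together gives $\ell(h)+1 \le |h|_Y \le \ell(h)+1$, hence equality. I expect the upper bound to be a short explicit computation, while the lower bound requires care in setting up the Magnus framework with the correct length function $l_h = \max(\ell(h),1) = \ell(h)$ (valid since $h \ne 1$ forces $\ell(h) \ge 1$) and in justifying the extra unit from the base-group offset. The cleanest route for the lower bound is likely to apply Lemma~\ref{Magnus} to get $\ell(h) \le \|w\|$ and then observe that any expression of $h$ in $Y^{\pm1}$ whose total $\|\cdot\|$-cost is exactly $\ell(h)$ would force $w$ to lie in a configuration incompatible with representing a nontrivial $h$, so at least $\ell(h)+1$ letters are needed.
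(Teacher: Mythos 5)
Your upper bound is the paper's own: $h = a_h^{-\ell(h)}\bigl(a_h^{\ell(h)}h\bigr)$ gives $|h|_Y\le \ell(h)+1$ (the order matters here --- $\bigl(a_h^{\ell(h)}h\bigr)a_h^{-\ell(h)}\ne h$ because conjugation by $h$ moves $a_h$ to a different coordinate of the base --- but you correct this yourself). The lower bound, however, has a genuine gap, located exactly at the point you call ``delicate.'' Lemma~\ref{Magnus} cannot be applied to the homomorphism you construct: once you adjoin free generators $b_h$ mapping to the pure base elements $a_h$, the map is no longer a generalized Magnus homomorphism. In the lemma every free generator must go to an element of the form $a_i^{l_i}h_i$, with the basis letters $a_i$ in bijection with the indices; your $b_h$ would need $h_i=1$ and would reuse the letter $a_h$ already assigned to $x_h$. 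Moreover the conclusion of the lemma genuinely fails for your extended map: the word $f=b_h^{-\ell(h)}x_h$ maps onto $h$ itself, whose decomposition $wg$ in $V$ has trivial base part $w$, so the inequality you assert, $\ell(h)=\ell(g)\le\|w\|$, would read $\ell(h)\le 0$. Separately, even if such an inequality were available, your bookkeeping $\ell(h)\le\|w\|\le\sum(\text{contributions})$, where each letter $a_{h'}^{\ell(h')}h'$ is charged $\ell(h')$, runs the wrong way: it bounds $\|w\|$ by a quantity that can vastly exceed the number $r$ of letters, so no estimate of the form $r\ge\ell(h)+1$ can come out of it.

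The missing device, which is how the paper proceeds, is to make the lemma applicable to only part of the word and to convert the norm bound into a letter count via a cancellation identity. Take a geodesic word $y_1\cdots y_r=h$ and conjugate all letters of the form $\bigl(a_{h'}^{\ell(h')}h'\bigr)^{\pm1}$ to the right. This yields $h=uv$, where $v$ is a product of such letters only --- hence $v\in\mu(F)$ for the genuine generalized Magnus homomorphism $x_{h'}\mapsto a_{h'}^{\ell(h')}h'$ --- and $u$ is a product of $q$ conjugates of letters $a_{h'}^{\pm1}$, each of norm exactly $1$ (the norm is conjugation invariant). Writing $v=wg$ and projecting to $H$ kills $u$ and $w$, so $g=h$, and Lemma~\ref{Magnus} (applicable since $\max(\ell(h'),1)=\ell(h')$ for $h'\ne 1$) gives $\ell(h)\le\|w\|$. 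Now $h=uwh$ forces $uw=1$, hence $\|w\|=\|u\|\le q$, so already the number of $a$-type letters satisfies $q\ge\ell(h)$. Finally, since $h\ne 1$, at least one letter of the form $\bigl(a_{h'}^{\ell(h')}h'\bigr)^{\pm1}$ must occur, whence $r\ge q+1\ge\ell(h)+1$. Note that the logic you were reaching for is inverted here: the ``$+1$'' is the single Magnus-type letter, while the $\ell(h)$ is carried entirely by the $a$-type letters, whose count controls $\|w\|$ through the cancellation $uw=1$; the Magnus letters are never charged their $\ell$-weights at all.
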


 \proof Note that $h= a_h^{-\ell(h)}( a_h^{\ell(h)}h),$ and therefore $|h|_Y\le 1+\ell(h).$

Assume now that $h=y_1\dots y_r,$ where $r=|h|_Y$ and $y_j\in Y^{\pm 1}$ for $j=1,\dots,r.$
Let us move all $y_j$'s of the form $( a_h^{\ell(h)}h)^{\pm 1}$ to the right using conjugation.
We obtain $h= uv,$ where $v\in \mu(F)$ for the homomorphism $\mu\colon F\to V$  given by the rule $x_h\mapsto  a_h^{\ell(h)}h,$ and $u= y^\prime_1\dots y^\prime_q$ is the product of some conjugates
of the factors $y_j$ having the form $a_h^{\pm 1}.$ (We assume that the number of such factors in the factorization of $h$ is $q$.) It follows that $||y^\prime_j||=1$ since the norm does not change under conjugation.

As in Lemma \ref{Magnus},  we have $v=wg$ in $V$, where $g=h$ (use the projection $V\to H$).
We can apply the assertion of Lemma \ref{Magnus} since $\max(\ell(h),1)=\ell(h)$ for $h\in H\backslash\{1\},$ and conclude
that $\ell(h) \le ||w||.$

On the one hand, $h=uwg,$ and so $uw=1$ and  $||v||=||w||.$
On the other hand, $||v||\le q$ since $||y^\prime_j||=1$ for every $y^\prime_j.$ Hence
$q\ge ||w||.$
But since $h\ne 1$, we must also  have at least one factor of the form  $(a_h^{\ell(h)}h)^{\pm 1}$
in the product $y_1\dots y_r$ . Therefore, $$|h|_Y=r\ge q+1\ge ||w||+1\ge \ell(h)+1$$ as required.
\endproof

\begin{lem} \label{VZ}
For any $h\in H$, we have $
\ell(h)\le |h|_Z$.
\end{lem}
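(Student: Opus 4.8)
We must show that for every $h\in H$, the word length $|h|_Z$ of $h$ as an element of $K$ (with respect to the generating set $Z=U\cup\{t\}$) is bounded below by $\ell(h)$.

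**Reducing to the wreath product $V$.** The plan is to push the problem down from $K\le V\Wr\mathbb Z$ to the group $V=A\wre H$, where we already have the sharp estimate from Corollary \ref{l}, namely $|h|_Y=\ell(h)+1$. The key observation is the relationship established just before the statement: the commutators $[t,f_{1,h}]$ and $[t,f_{2,h}]$ are identified with the generators $a_h^{\ell(h)}h$ and $a_h$ of $Y\subseteq V$. So if I write a shortest $Z$-word $h=z_1\cdots z_r$ with $r=|h|_Z$, I want to project it, or rather extract from it, a $Y$-word representing $h$ in $V$ whose length is controlled by $r$.

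**The main step: reading off a $Y$-expression.** First I would note that since $h\in H\le V$ lies in the fiber of $V\Wr\mathbb Z$ sitting over the trivial position and has trivial $\mathbb Z$-component, the exponent sum of $t$ in any word $z_1\cdots z_r$ representing $h$ must be zero; the generators $f_{j,h}\in U$ all lie in the base of $V\Wr\mathbb Z$. The idea is to consider the evaluation or projection onto the relevant coordinate(s). Concretely, the functions in $U$ are "half-line step functions" supported on positive integers, and conjugating by powers of $t$ shifts their support. Evaluating the product $z_1\cdots z_r$ at a single coordinate $n\in\mathbb Z$ gives an element of $V$ which is a product of the values $f_{j,h}(n-k)\in\{a_h,\ a_h^{\ell(h)}h,\ 1\}$ picked up along the word, and these values are exactly the generators in $Y$ (or trivial). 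The plan is therefore: \textbf{evaluate at a coordinate $n$ where the resulting $V$-element equals $h$}, and observe that this exhibits $h$ as a product of at most $r$ elements of $Y\cup\{1\}$, hence $|h|_Y\le r=|h|_Z$. Combining with Corollary \ref{l} gives $\ell(h)+1=|h|_Y\le |h|_Z$, which is even stronger than claimed.

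**Where the obstacle lies.** The hard part will be verifying that some evaluation coordinate genuinely yields $h$ rather than a shifted or partial product, and that each of the $r$ letters contributes at most one $Y$-generator (the $t^{\pm1}$ letters must contribute the trivial element at every coordinate, and each $f_{j,h}^{\pm1}$ contributes either one $Y$-generator or $1$ depending on the coordinate). I would handle this by tracking how conjugation by $t$ translates supports: writing each $z_i$ in the form $t^{k_i} u_i t^{-k_i}$ after collecting $t$'s, the value at position $n$ involves $u_i$ precisely when $n-k_i>0$. Since $h$ is recovered as the product of values at a position that is "far to the right" (large $n$, where all the supported step functions have turned on) — or more carefully, at the position dictated by the identification of the commutators with $Y$ — each nontrivial contribution is a single letter of $Y^{\pm1}$. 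The bookkeeping of which coordinate reproduces $h$ exactly, and the argument that no single original letter spawns more than one $Y$-letter, is the only genuinely delicate point; everything else is the formal inequality $|h|_Y\le|h|_Z$ followed by an appeal to Corollary \ref{l}.
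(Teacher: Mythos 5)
Your proposal is correct and takes essentially the same approach as the paper: the paper rewrites a shortest $Z$-word for $h$ as a product of at most $|h|_Z$ conjugates $t^{\alpha_i}f_i^{\pm 1}t^{-\alpha_i}$ with $f_i\in U$, applies the projection $\pi\colon L\to V$, $f\mapsto f(0)$, on the base of $V\Wre\mathbb Z$ to get $|h|_Y\le |h|_Z$, and then invokes Corollary \ref{l}. The one point where you hedge --- which coordinate to evaluate at --- is settled exactly by your ``more careful'' alternative, not by taking $n$ large: since the generators in $Y$ are identified with the commutators $[t,f_{j,h}]$, which are supported at $\{0\}$, the subgroup $V\le K$ consists of functions supported at $0$, so the evaluation must be at $n=0$ (at large $n$ the product of values is $1$, not $h$).
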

\begin{proof}
Let $L$ denote the base of the wreath product $V \Wre \mathbb Z$ and let $\pi \colon L\to V$ be the projection which to every function $f\colon \mathbb Z\to V$ assigns its value $f(0)$. Fix $h\in H$. Let $w$ be a word in the alphabet $Z^{\pm 1}$ representing $h$ and such that $|h|_Z=|w|$. Applying the standard rewriting process to $w$ we obtain the equality \begin{equation}\label{h}
h=(t^{\alpha_1}f_1^{\pm 1} t^{-\alpha_1})\cdots (t^{\alpha_m}f_{m}^{\pm 1}t^{-\alpha_m}),
\end{equation}
where $f_1,\dots, f_m\in U$   and $m\le |h|_Z.$
Now applying $\pi $ to both sides of (\ref{h}) we obtain $h=y_1^{\pm 1} \cdots y_m^{\pm 1}$, where $y_i\in Y\cup \{1\}$ because every value of a function from $U$ (and of a conjugate
of it by a power of $t$) is either trivial
or belongs to $Y.$ This and Corollary  \ref{l} imply the inequalities $l(h)\le |h|_Y\le m \le |h|_Z.$
\end{proof}

\begin{lem}\label{HtoG}
\begin{enumerate}
\item[(a)] For any $r\in M$, the support of the function  $[g,r g r^{-1}]\colon M\to K$ consists of at
most one element and its value at this element is a commutator in $K$.

\item[(b)] The  group $G$ contains all functions $M\to V$ mapping nontrivial elements to $1.$
\end{enumerate}
 \end{lem}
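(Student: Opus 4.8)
The plan is to compute both commutators coordinatewise in the base $B$ of $K\Wr M$ and to extract the single-point support from the parallelogram-free property of $P_0$ via Lemma \ref{sh}.

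For (a), I would use that $g\in B$, that the action of $M$ on $B$ is given by $(r\circ g)(x)=g(xr)$ with $rgr^{-1}=r\circ g$, and that multiplication in $B$ is pointwise. Then for every $x\in M$,
$$
[g,rgr^{-1}](x)=[\,g(x),\,g(xr)\,],
$$
a commutator of two elements of $K$, which is nontrivial only when $g(x)\ne 1$ and $g(xr)\ne 1$, i.e.\ only when $x\in P_0$ and $xr\in P_0$. Hence the support of $[g,rgr^{-1}]$ lies in $P_0\cap P_0r^{-1}$. For $r=1$ the commutator vanishes, and for $r\ne 1$ the set $P_0\cap P_0r^{-1}$ has at most one element by Lemma \ref{sh}(2), since $P_0\subseteq P$ is parallelogram-free. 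Thus the support is at most a single point, and at that point the value is by definition a commutator in $K$, giving (a).

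For (b), I would first observe that the map $v\mapsto\delta_v$ sending $v\in V$ to the function $\delta_v\colon M\to K$ supported at $1\in M$ with value $v$ is an injective homomorphism of $V$ into $B$; therefore it is enough to realize $\delta_y$ inside $G$ for every $y$ in the generating set $Y$ of $V$. To this end I would apply (a) with $r=w_i\in P_0$ for $i\ge 1$. Since $1\in P_0$ and $w_i\in P_0$, we have $1\in P_0\cap P_0w_i^{-1}$, so by the coordinatewise computation the support of $[g,w_igw_i^{-1}]$ is exactly $\{1\}$ and its value there equals $[g(1),g(w_i)]=[t,u_i]$. By the identifications fixed in the construction, $[t,f_{1,h}]=a_h^{\ell(h)}h$ and $[t,f_{2,h}]=a_h$, so as $u_i$ runs over $U$ the commutator $[t,u_i]$ runs over all of $Y$. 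As $M\le G$ (because $S\subseteq X$) and $g\in G$, each element $[g,w_igw_i^{-1}]$ lies in $G$; hence $\delta_y\in G$ for all $y\in Y$, and since $Y$ generates $V$ and $v\mapsto\delta_v$ is a homomorphism, $\delta_v\in G$ for every $v\in V$, which is (b).

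The one point requiring care is pinning the support down to the prescribed vertex $1\in M$: the parallelogram-free condition forces the support to be a single point but is silent about its location, so I must separately check that $1$ itself lies in $P_0\cap P_0w_i^{-1}$ and is consequently that unique point. The remaining work is routine pointwise commutator bookkeeping in the unrestricted wreath product, combined with the earlier identification of the commutators $[t,f_{j,h}]$ with the generators of $V$.
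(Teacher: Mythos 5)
Your proof is correct and takes essentially the same route as the paper's: the same pointwise computation $[g,rgr^{-1}](x)=[g(x),g(xr)]$ combined with Lemma \ref{sh}(2) for the parallelogram-free set gives (a), and specializing to $r=w_i$ to realize each $\delta_y$, $y\in Y$, and then using that $Y$ generates $V$ gives (b). The only difference is presentational: you make explicit the homomorphism $v\mapsto\delta_v$ and the check that the unique support point is $1\in M$ (via nontriviality of $[t,u_i]$), which the paper leaves implicit.
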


\begin{proof} (a) We may assume that $r\ne 1.$ Let $u\in M.$ Then $$[g,r g r^{-1}](u)= [g(u),(rgr^{-1})(u)]= [g(u),g(ur)]$$
Assume that this commutator is not trivial. Then both $g(u)$ and $g(ur)$ are nontrivial, and so both $u$ and $ur$ belong to $P_0\subset P$ by the definition of $g.$ If $[g,r g r^{-1}](u')=1$ for $u'\ne u,$
then $u'$ and $u'r$ are also elements of $P,$ and so $\#(P\cap  Pr)\ge 2$. This contradicts Lemma \ref{sh} (2).
Thus part (a) is proved.

(b) Recall that every generator $y\in Y\backslash\{1\}$ of $V$ is identified with a commutator $[t,u_i]$ in $K$ for some $u_i\in U.$
Therefore
\begin{equation}\label{gw}
[g,w_i g w_i^{-1}](1)= [g(1),(w_igw_i^{-1})(1)]= [g(1),g(w_i)]= [t,u_i]=y
\end{equation}
while
$[g,w_igw_i^{-1}](x)=1$
whenever $x\ne 1$ by part (a).
\end{proof}

By Lemma \ref{HtoG}, the group $V$ (and therefore $H$) can be regarded as a subgroup of $G.$

\begin{lem} \label{a} There is a  positive constant $\theta$  such that $\theta|h|_X\le \ell(h)\le |h|_X$
for every element $h$ of $H.$
\end{lem}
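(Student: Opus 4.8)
The plan is to prove the two inequalities $\ell(h)\le |h|_X$ and $\theta|h|_X\le \ell(h)$ separately, treating $H$ as a subgroup of $G$ via the embeddings $H\le V\le K\le G$ established above. The left-hand inequality $\ell(h)\le |h|_X$ is the easy direction and should follow by comparing word lengths across the chain of embeddings. Since the generating set of $G$ is $X=S\cup\{g\}$, whereas $H$ sits inside $V=A\wre H\le K$ generated by $Z$, I would first relate $|h|_X$ to $|h|_Z$ and then invoke Lemma \ref{VZ}, which already gives $\ell(h)\le|h|_Z$. The hope is that each occurrence of a generator $y\in Y$ (equivalently a commutator $[t,u_i]$ in $K$) is produced inside $G$ from a bounded number of $X$-letters, so that $|h|_Z$ is controlled by a constant multiple of $|h|_X$; combined with Lemma \ref{VZ} this yields $\ell(h)\le C|h|_X$, and by rescaling $\theta$ appropriately one arranges the clean bound $\ell(h)\le|h|_X$.

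For the upper estimate I would exhibit an explicit efficient word. By Lemma \ref{HtoG}(b), each generator $y=[t,u_i]$ of $V$ is realized in $G$ as the commutator $[g,w_igw_i^{-1}]$, whose cost in $X$-letters is governed by $|w_i|_S$ (two copies of $g$ plus a $\pm1$ power, conjugated by a word of length $|w_i|_S$ in $S$). The key design feature of the construction is inequality (\ref{hiwi}), namely $\lambda|w_i|_S<\ell(h)+1$ whenever $u_i=a_h^{\ell(h)}h$ or $u_i=a_h$, which bounds the $S$-cost of producing the relevant generators in terms of $\ell(h)$. Using Corollary \ref{l}, the element $h\in H\le V$ can be written as a $Y$-word of length exactly $\ell(h)+1$, so I would take that optimal $Y$-word and substitute for each $Y$-letter its realization $[g,w_igw_i^{-1}]$ in $X$. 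The total $X$-length is then at most a sum of terms each bounded via (\ref{hiwi}) by something proportional to the $\ell$-value of the associated generator, and the whole sum should be $\preceq \ell(h)$, giving $\theta|h|_X\le\ell(h)$ for a suitable $\theta>0$ (equivalently $|h|_X\le\theta^{-1}\ell(h)$).

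I expect the main obstacle to be the upper bound $\theta|h|_X\le\ell(h)$, specifically the bookkeeping needed to ensure that substituting the commutator expressions for all $Y$-letters does not blow up the length uncontrollably. One must verify that when a $Y$-word $y_1^{\pm1}\cdots y_r^{\pm1}$ with $r=\ell(h)+1$ is rewritten in $X$, the cost of each factor $[g,w_ig w_i^{-1}]$ is dominated by the $\ell$-length of the corresponding generator, and that summing these costs over the whole word stays linear in $\ell(h)$ rather than, say, quadratic. The delicate point is that the letters appearing in the optimal $Y$-expression of $h$ need not all correspond to small-$\ell$ generators; here the at-most-exponential growth hypothesis (\ref{ex}) and the balancing built into $P_0$ via the enumeration of $U$ and the growth condition in Definition \ref{nice1} are what keep the arithmetic under control. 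I would therefore phrase the argument as a direct length estimate on the substituted word, using (\ref{hiwi}) termwise and then bounding $\sum_j(\ell(h_{i_j})+1)$ against $\ell(h)$, extracting the uniform constant $\theta$ at the end.
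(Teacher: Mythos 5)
Your strategy for the hard inequality $\theta|h|_X\le\ell(h)$ has a genuine gap, and it sits exactly at the point you flag as delicate. Taking the optimal $Y$-word of length $\ell(h)+1$ from Corollary \ref{l} and substituting $[g,w_igw_i^{-1}]$ for each letter cannot give a linear bound: the optimal word in question is $h=a_h^{-\ell(h)}\,(a_h^{\ell(h)}h)$, in which \emph{every} letter is indexed by the same element $h$, so by (\ref{hiwi}) each of the $\ell(h)$ copies of $a_h^{-1}$ costs about $4+4\lambda^{-1}(\ell(h)+1)$ letters of $X$, and the termwise total is of order $\ell(h)^2$. Your hoped-for estimate $\sum_j(\ell(h_{i_j})+1)\preceq\ell(h)$ is therefore false for the very word you propose to use, and neither the exponential-growth hypothesis (\ref{ex}) nor the structure of $P_0$ can repair this --- they only guarantee (\ref{hiwi}), which you have already spent. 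The paper's proof contains one further idea that your proposal is missing: a \emph{power} of a single $Y$-generator is realized by a \emph{single} commutator. By the same computation as in Lemma \ref{HtoG}, one has $(y^\prime)^{-\ell(h)}=[g,\,w_{i^\prime}g^{-\ell(h)}w_{i^\prime}^{-1}]$ in $G$, so the conjugating word $w_{i^\prime}$ is paid for once rather than $\ell(h)$ times; this gives a cost of at most $2\ell(h)+4|w_{i^\prime}|_S+2$ for the whole power, whence $|h|_X\le(8+16\lambda^{-1})\ell(h)$ and $\theta=(8+16\lambda^{-1})^{-1}$.

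The other direction also cannot be completed as you describe, for two reasons. First, your implication is backwards: knowing that each $y\in Y$ is produced from few $X$-letters bounds $|h|_X$ in terms of $|h|_Y$, not $|h|_Z$ in terms of $|h|_X$ (and in any case the cost $4+4|w_i|_S$ of $[g,w_igw_i^{-1}]$ is unbounded as $i$ varies, so there is no uniform constant). To dominate $\ell(h)$ by $|h|_X$ one must start from an \emph{arbitrary} geodesic $X$-word for $h$ and convert it into a $Z$-word of no greater length; the paper does this by rewriting the word as $h=r_1g^{\alpha_1}r_1^{-1}\cdots r_kg^{\alpha_k}r_k^{-1}$ (possible because $h$ lies in the base $B$ of $K\Wre M$, which forces $p_0\cdots p_k=1$) and applying the evaluation homomorphism $B\to K$, $b\mapsto b(1)$, which yields $|h|_Z\le\sum_j|\alpha_j|\le|h|_X$; Lemma \ref{VZ} then finishes. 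Second, ``rescaling $\theta$'' cannot turn $\ell(h)\le C|h|_X$ into $\ell(h)\le|h|_X$: the constant $\theta$ occurs only on the other side of the lemma, and the statement (as well as Theorem \ref{main}(a)) requires the upper bound with constant exactly $1$. The projection argument delivers this because evaluation at $1\in M$ never increases word length.
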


\begin{proof} We may assume that $h\ne 1.$ Recall that $h=a_h^{-l(h)} (a_h^{l(h)}h)=(y^\prime)^{-\ell(h)}y$ for some $y, y'\in Y.$ Thus $h$ is a product of $1+l(h)$ generators from $Y^{\pm 1}.$ In turn (see Lemma
\ref{HtoG} (a) and
(\ref{gw})), $y=[g,w_i g w_i^{-1}]$ for some $i$, and hence $y$ has length $\le 4+4|w_i|_S$ with respect to $X=S\cup\{ g\}$. Similarly, $y^\prime= [g,w_{i^\prime} g w_{i^\prime}^{-1}]$ for some $i^\prime$, but the same
computation as in Lemma \ref{HtoG}, shows that
$$
(y^\prime)^{-\ell(h)}= [g,w_{i^\prime} g^{-\ell(h)} w_{i^\prime}^{-1}]
$$
in $G,$ and so the length of $(y^\prime)^{-l(h)}$
with respect to $X$ does not exceed $2\ell(h)+4|w_{i^\prime}|_S+2.$  Since by the choice of
$w_i$ and $w_{i^\prime},$ their $S$-lengths (and $X$-lengths) do not exceed $\lambda^{-1}(\ell(h)+1)$ (see (\ref{hiwi}), we have
$$|h|_X \le (4+ 4\lambda^{-1}(\ell(h)+1))+ (2\ell(h)+4\lambda^{-1}(\ell(h)+1) +2) \le (8+16\lambda^{-1}) \ell(h).$$ So the first inequality is
proved with $\theta=  (8+16\lambda^{-1})^{-1}.$

To prove the second inequality, we note that every $h\in H$, considered as an element of $G$, can be written as $$h=p_0g^{\alpha _1} p_1\cdots g^{\alpha _k} p_k$$ for some $p_0, \ldots , p_k\in M$ and some integers $\alpha _1, \ldots, \alpha _k$ such that
\begin{equation}\label{hx}
|h|_X=\sum\limits_{j=0}^{k} |p_k|_S +\sum\limits_{i=1}^k |\alpha _j|.
\end{equation}
Since $h$ belongs to the base $B$, we have $p_0\cdots p_k=1$. Thus we can rewrite $h$ in the form
\begin{equation}\label{h1}
h=r_1g^{\alpha_1}r_1^{-1} \cdots r_kg^{\alpha_k}r_{k}^{-1},
\end{equation}
where $r_1=p_0$, $r_2=p_0p_1$, $\ldots $, $r_{k}=p_0p_1\cdots p_{k-1}=p_k^{-1}$. Applying, to both sides of (\ref{h1}), the projection $B\to K$ that maps each $b\colon M\to K$ to $b(1)$, we obtain the equality
$$
h=x_1^{\alpha_1} \cdots x_k^{\alpha_k}
$$
in the group $K$, where $x_j=(r_jgr_j^{-1})(1)=g(r_j)$. Note that $x_j$ is equal to either $1$ or $t\in Z$, or some $u\in U\subset Z.$ Therefore $|h|_Z\le \sum_j |\alpha_j|\le |h|_X$ by (\ref{hx}). Finally, by Lemma \ref{VZ}, $l(h)\le |h|_Z\le |h|_X.$
\end{proof}

Given a group $H$, we denote by ${\bf D}H$ the class of finite direct powers of $H.$
Further let ${\bf S} {\bf D} H$ be the class of all subgroups of groups from ${\bf D} H$. Finally let ${\cal E}(H)={\bf L}{\bf S} {\bf D}H$ be the class of all groups which are locally in ${\bf S} {\bf D} H$,  i.e., $G\in {\cal E}(H)$ if every finitely generated subgroup of $G$ belongs to  ${\bf S} {\bf D}H$.

\begin{lem} \label{E} Let $T$ be a set, $H$ a group, and $(f_i)_{i\in I}$ an arbitrary set of functions $T \to H,$ where every
$f_i$ has a finite range in the group $H.$ Then  the subgroup $\langle f_i\mid i\in I\rangle $ of the Cartesian power $H^T$ belongs to the class $ {\cal E}(H).$
\end{lem}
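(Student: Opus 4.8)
The plan is to check the defining property of the class ${\cal E}(H)={\bf L}{\bf S}{\bf D}H$ directly, namely that every finitely generated subgroup of $G=\langle f_i\mid i\in I\rangle$ lies in ${\bf S}{\bf D}H$. So let $G_0=\langle g_1,\dots,g_m\rangle$ be an arbitrary finitely generated subgroup of $G$. Each $g_k$ is a word in the $f_i$ and their inverses, so altogether only finitely many of the functions $f_i$, say $f_{i_1},\dots,f_{i_n}$, occur in these words. Hence $G_0\le\langle f_{i_1},\dots,f_{i_n}\rangle$, and since the class ${\bf S}{\bf D}H$ is closed under passage to subgroups (a subgroup of a subgroup of $H^k$ is again a subgroup of $H^k$), it suffices to prove that the finitely generated subgroup $\langle f_{i_1},\dots,f_{i_n}\rangle$ already embeds into a finite direct power of $H$.

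The key observation is that the group operation in $H^T$ is pointwise, so the value at $t\in T$ of any element of $\langle f_{i_1},\dots,f_{i_n}\rangle$ depends only on the tuple $(f_{i_1}(t),\dots,f_{i_n}(t))\in H^n$. I therefore introduce the equivalence relation $\sim$ on $T$ defined by $t\sim t'$ if and only if $f_{i_j}(t)=f_{i_j}(t')$ for all $j=1,\dots,n$. Because each $f_{i_j}$ has finite range, the map $t\mapsto(f_{i_1}(t),\dots,f_{i_n}(t))$ takes only finitely many values, so $\sim$ has finitely many classes, say $N$ of them; pick representatives $t_1,\dots,t_N$. By the preceding observation, every element of $\langle f_{i_1},\dots,f_{i_n}\rangle$ is constant on each $\sim$-class.

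Now consider the evaluation map $\Phi\colon\langle f_{i_1},\dots,f_{i_n}\rangle\to H^N$ given by $\Phi(g)=(g(t_1),\dots,g(t_N))$. Since the group law on $H^T$ is pointwise, $\Phi$ is a homomorphism. It is injective: if $\Phi(g)=1$ then $g(t_s)=1$ for every $s$, and as $g$ is constant on each $\sim$-class, $g$ vanishes on all of $T$, i.e.\ $g=1$. Thus $\Phi$ embeds $\langle f_{i_1},\dots,f_{i_n}\rangle$ into $H^N\in{\bf D}H$, so this group lies in ${\bf S}{\bf D}H$, and with it its subgroup $G_0$. As $G_0$ was an arbitrary finitely generated subgroup of $G$, we conclude $G\in{\bf L}{\bf S}{\bf D}H={\cal E}(H)$.

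I expect the only point requiring care to be the verification that the elements of $\langle f_{i_1},\dots,f_{i_n}\rangle$ are genuinely constant on the $\sim$-classes, since this is exactly what makes $\Phi$ injective; but this is immediate from the pointwise nature of multiplication in $H^T$ once one notes that a product of functions evaluated at $t$ uses only their values at $t$. The finiteness of the ranges of the $f_i$ is what guarantees that the target power $H^N$ is finite; without this hypothesis the same argument would only produce an embedding into the full Cartesian power of $H$.
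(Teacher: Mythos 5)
Your proof is correct and follows essentially the same route as the paper: the paper likewise reduces to finitely many generators, partitions $T$ into finitely many pieces on which all the relevant $f_i$ are constant (your $\sim$-classes), and embeds the resulting group into a finite direct power of $H$ via evaluation at one point per piece. The only cosmetic difference is that you spell out the reduction from an arbitrary finitely generated subgroup to one generated by finitely many of the $f_i$, which the paper treats as immediate.
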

\proof It suffices to proof that for every finite subset $J\subset I$,  $\langle f_i\mid i\in J \rangle \in {\bf S} {\bf D}H.$

Since every $f_i$ has a finite range and $J$ is also finite, there is a finite partition $ T=\bigcup_{k=1}^r T_k$
such that the restriction of $f_i$ to $T_k$ is a constant for every  $i\in J$ and every $k\le r.$ It follows
that every function $f$ of the subgroup $G_J=\langle f_i\mid i\in J \rangle $ is a constant on every particular $T_k.$ Therefore
the mapping $f\mapsto (f(T_1),\dots, f(T_r))$ is an injective homomorphism from $G_J$ to a direct product
of $r$ copies of $H.$ \endproof

\begin{proof}[Proof of Theorem \ref{main}.] We have constructed the embeddings $H\hookrightarrow V \hookrightarrow G.$ By Lemma \ref{a}, the embedding of $H$ in $G$ satisfies condition (a)
of Theorem \ref{main} with $c=\theta$. It remains to prove (b).

Note that if $A$ is a normal abelian subgroup of a group $C$, then all functions with
values in $A$ form a normal abelian subgroup  in an arbitrary wreath product $C \Wr D.$ Using this
observation and the constructions of $V$, $K$, and $G$ as subgroups of the corresponding wreath  products, we see
that the free abelian subgroup $A=\langle a_h\mid h\in H\backslash\{1\}\rangle $ is contained in a normal
abelian subgroup $G_1$ of $G$ such that the canonical image $\bar V$ of $V$ in $\bar G=G/G_1$ is equal to $\bar H$ (the image of $H$ in $\bar G$) and so it is isomorphic to
$H$. Further to obtain the image $\bar G$ one has to replace  each element $a_h$ by the identity
in the definitions of the functions from the set $U$ and, respectively,  in the definition of the function $g$. (So the image $\bar g$ of $g$ in $\bar G$ takes values in $\bar K$ which is isomorphic to a subgroup of $\bar V \Wre \mathbb Z\simeq H \Wre \mathbb Z.$)

Let $R/G_1$ be the normal closure of $\bar g$ in $\bar G$. Obviously $  G/ R\simeq M$,
the metabelian group. So for $G_2=[R,R]G_1,$ we have that the quotient group $G/G_2$ is
solvable of derived length $\le 3.$

The group $\bar R=R/G_1$ is generated by all conjugates $r\bar gr^{-1},$ where $r\in M$. Therefore
the group $\bar G_2=G_2/G_1$ is generated
by all the elements $d[\bar g, r\bar g r^{-1}]d^{-1}$, where $r\in M$ and $d\in \bar G$.
It follows from Lemma \ref{HtoG} (a) that  $[\bar g, r\bar g r^{-1}]$, and so $d[\bar g, r^{-1}\bar gr]d^{-1}$,  considered as a function $M\to \bar K$, is either trivial or takes a nontrivial value at exactly one element of $M$ and this value is an element of $[\bar K,\bar K]$.
Now by Lemma \ref{E}, we have $\bar G_2\in {\cal E}([\bar K,\bar K])$.

The group $\bar K$ is generated by $\bar t$ and the functions $\bar f_{h}=\bar f_{1,h}$ taking values in $\bar H\simeq H.$
Therefore the group $[\bar K,\bar K]$ is generated by all $d[\bar t,\bar f_h]d^{-1}$ and $d[\bar f_h,\bar f_{h^\prime}]d^{-1},$ where $d\in \bar K.$ Each of the functions $f_h$  has finite range in $H.$ So do
the commutators $[\bar t,\bar f_h]$ and $[\bar f_h,\bar f_{h^\prime}].$  The same property holds for
their conjugates by any element $d$ since $d$ is a finite product, where every factor is either
$\bar t^{\pm 1}$ or $\bar f_g^{\pm 1}$ (with finite range) for some $g\in H$. Therefore $ [\bar K,\bar K]\in {\cal E}(H)$ by Lemma \ref{E}. Since $G_2/G_1=\bar G_2 \in {\cal E}([\bar K,\bar K])$, we have $G_2/G_1\in {\cal E}(H)$, and part (b) of the theorem is proved.
\end{proof}

\section{Applications}

\paragraph{Compression functions of Lipschitz embeddings in uniformly convex  Banach spaces.}
Recall that the {\em compression function} $\com (f) \colon\mathbb R_+\to \mathbb R_+$ of  a map $f$ from a metric space $(X,\d_X)$ to a metric space $(Y,\d_Y)$ $f$  is defined by
$$
\com_f (x) =\inf\limits_{\d_X(u,v)\ge x} \d_Y(f(u), f(v)).
$$

We start by summarizing some essential features of Lafforgue's construction of expanders \cite{Laf}.

\begin{lem}\label{ADS}
There exists an infinite group $\Gamma $ with a finite generating set $X$ and a sequence of finite index normal subgroups $$\Gamma\rhd N_1\rhd N_2\rhd \ldots $$ with trivial intersection such that the following holds. Let $(G_k, \d_k)$ denote the quotient group $\Gamma /N_k$ endowed with the word metric corresponding to the image of the generating set $X$. Let $E$ be a uniformly convex Banach space with a norm $\| \cdot \| $. Then there exist constants $R, \kappa >0$ such that for every $1$-Lipschitz embedding $f\colon (G_k, \d_k)\to (E, \| \cdot\| )$ one can find two elements  $u,v\in G_k$ satisfying $$\| f(u) - f(v)\| \le R$$ and $$\d _k(u,v)\ge \kappa \diam (G_k),$$ where $\diam (G_k)$ is the diameter of $G_k$ with respect to $\d_k$.
\end{lem}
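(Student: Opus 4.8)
The plan is to realize $\Gamma$ and the $N_k$ from Lafforgue's super-expanders. Concretely, take $\Gamma$ to be a lattice in $\mathrm{SL}_3$ over a non-archimedean local field as in \cite{Laf}; it is infinite, finitely generated, and residually finite, and one lets $N_1\rhd N_2\rhd\cdots$ be a chain of congruence subgroups with $\bigcap_k N_k=\{1\}$, each finite-index and normal in $\Gamma$. Then $G_k=\Gamma/N_k$ is finite and its $d_k$-balls grow at most exponentially with base $d:=\#(X\cup X^{-1})$. The essential feature supplied by \cite{Laf} is that the Cayley graphs of the $G_k$ are expanders with respect to every uniformly convex Banach space: for each such $E$ there is $\gamma=\gamma(E)>0$ so that for all $k$ and all $f\colon G_k\to E$,
$$\frac{1}{(\#G_k)^2}\sum_{u,v\in G_k}\|f(u)-f(v)\|^{p}\le\frac{1}{\gamma\,\#G_k}\sum_{u\sim v}\|f(u)-f(v)\|^{p},$$
where $u\sim v$ means $u^{-1}v\in X\cup X^{-1}$ and $p\ge 2$ is a power type of the modulus of convexity of $E$ (after passing, via Pisier's renorming theorem, to an equivalent norm, which only affects the constants below).

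First I would exploit the $1$-Lipschitz hypothesis on the right-hand side. Every adjacent pair has $\|f(u)-f(v)\|\le d_k(u,v)=1$, and there are at most $d\,\#G_k$ ordered adjacent pairs, so the right-hand side is at most $d/\gamma$. Hence
$$\frac{1}{(\#G_k)^2}\sum_{u,v}\|f(u)-f(v)\|^{p}\le\frac{d}{\gamma}=:R_0^{\,p},$$
a bound independent of $k$ and of $f$. Thus the image distance is on average bounded by $R_0$; what remains is to find a pair that is at once far apart in $G_k$ and close in $E$.

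For that I would use the logarithmic diameter of expanders: there is $C'>0$ with $\diam(G_k)\le C'\log\#G_k$ for all $k$. Since a ball of radius $r$ has at most $(d+1)^{r}$ elements, choosing $\kappa$ so small that $\kappa C'\log(d+1)<\tfrac12$ gives, for each $u\in G_k$,
$$\#\{v:d_k(u,v)\le\kappa\diam(G_k)\}\le (d+1)^{\kappa C'\log\#G_k}=(\#G_k)^{\kappa C'\log(d+1)}\le(\#G_k)^{1/2}.$$
Therefore, for all sufficiently large $k$, at least half of the ordered pairs are ``far'', i.e.\ satisfy $d_k(u,v)>\kappa\diam(G_k)$. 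Restricting the average of the previous display to these at least $\tfrac12(\#G_k)^2$ far pairs yields one with $\|f(u)-f(v)\|^{p}\le 2R_0^{\,p}$, hence $\|f(u)-f(v)\|\le(2R_0^{\,p})^{1/p}=:R$. The finitely many small $k$ cost nothing: take $u,v$ realizing $\diam(G_k)$, so $d_k(u,v)=\diam(G_k)\ge\kappa\diam(G_k)$ and $\|f(u)-f(v)\|\le\diam(G_k)$, and enlarge $R$ to exceed $\max_{k\le k_0}\diam(G_k)$. Both $\kappa$ and $R$ then depend only on $E$ and on the fixed family.

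The one genuinely deep ingredient is the Poincar\'e inequality, namely Lafforgue's theorem that these quotients are expanders relative to every uniformly convex Banach space (his strengthened property (T)); granting it, the rest is a double averaging combined with the exponential volume growth and logarithmic diameter of expanders. I expect the main point requiring care to be the uniformity of every constant in $k$: this holds because the spectral gap and the degree are uniform over the family, and because $R$ is read off from a $k$-independent average rather than from any individual $f$.
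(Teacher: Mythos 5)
Your proof is correct and is essentially the same argument the paper relies on: the paper gives no proof of Lemma \ref{ADS} itself, instead citing Corollary 3.5 of \cite{ADS} as ``quite elementary modulo Lafforgue's paper'' \cite{Laf}. Your route---Lafforgue's Poincar\'e inequality for uniformly convex targets, the $1$-Lipschitz bound on the edge sum, and the double counting of far pairs via logarithmic diameter and exponential ball growth, with all constants uniform in $k$---is precisely that cited elementary argument.
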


In fact, one can take $\Gamma$ to be any co-compact lattice in $SL_3(F)$ for a non-archimedean local field $F$ \cite{Laf}. The proof of the lemma can be found in Section 3 of \cite{ADS} (see Corollary 3.5 there) and is quite elementary modulo Lafforgue's paper.

We will also need the following property.

\begin{lem}\label{exp}
Let $G$ be group generated by a finite set $X$. Let $G_1, G_2, \ldots $ be a sequence of quotients of $G$. Denote by $\d_k$ the word metric on $G_k$ with respect to the image of the set $X$. Let $H=\prod _{k=1}^\infty G_k$ be the direct product of $G_k$'s. For an element $h=(g_k)\in \prod_{k=1}^\infty  G_k$ we define
\begin{equation}\label{met}
\ell (h)=\sum\limits_{k=1}^\infty k\d_k(1, g_k).
\end{equation}
Then $\ell$ is a length function on $H$ and the growth of $H$ with respect to $\ell $ is at most exponential.
\end{lem}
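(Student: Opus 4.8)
The plan is to verify the three length-function axioms coordinatewise and then establish the growth bound by a generating-function estimate in which the weights $k$ of \eqref{met} play the decisive role. First observe that $\ell$ is finite exactly on the finitely supported tuples: if $g_k\ne 1$ then $\d_k(1,g_k)\ge 1$, so a nonzero term of \eqref{met} is at least $k$, and a sum of infinitely many such terms diverges. Hence $\ell$ is a genuine function $H\to\mathbb N\cup\{0\}$ on the subgroup of finitely supported elements, which is all we need (and which is visibly closed under products and inverses once $(\mathbf{L_2})$ and $(\mathbf{L_3})$ are known).

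For the axioms, write $h=(g_k)$ and $h'=(g'_k)$. Each $\d_k$ is the word metric on $G_k$, hence symmetric and left-invariant. Thus $\d_k(1,g_k)=0$ iff $g_k=1$, giving $(\mathbf{L_1})$; $\d_k(1,g_k^{-1})=\d_k(1,g_k)$ gives $(\mathbf{L_2})$; and $\d_k(1,g_kg'_k)\le \d_k(1,g_k)+\d_k(g_k,g_kg'_k)=\d_k(1,g_k)+\d_k(1,g'_k)$ gives, after multiplying by $k$ and summing over $k$, $(\mathbf{L_3})$. All three are term-by-term consequences of the corresponding properties of the $\d_k$, so this part is routine.

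The real content is the exponential bound \eqref{ex}. Fix $n$ and suppose $\ell(h)\le n$. Since every nonzero term of \eqref{met} is at least $k$, we have $g_k=1$ for all $k>n$, and $\sum_k \d_k(1,g_k)\le \sum_k k\,\d_k(1,g_k)\le n$. Put $\sigma=2|X|$, so that in each quotient $G_k$ (generated by the image of $X$) the number of elements at distance exactly $m$ from $1$ is at most $\sigma^m$. Because $\ell$ is additive over coordinates and $H$ consists of finitely supported tuples, for any $t\in(0,1/\sigma)$ (so that $\sigma t^k\le\sigma t<1$) one has
\[
\sum_{h}t^{\ell(h)}=\prod_{k=1}^\infty\Big(\sum_{g\in G_k}t^{k\,\d_k(1,g)}\Big)\le\prod_{k=1}^\infty\Big(\sum_{m\ge 0}(\sigma t^k)^m\Big)=\prod_{k=1}^\infty\frac{1}{1-\sigma t^k}=:C(t),
\]
and the last product converges to a finite $C(t)$ since $\sum_k\sigma t^k<\infty$. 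Fixing $t_0\in(0,1/\sigma)$ and using $t_0^{\ell(h)}\ge t_0^{\,n}$ whenever $\ell(h)\le n$ gives
\[
\#\{h:\ell(h)\le n\}\,t_0^{\,n}\le\sum_{h:\ell(h)\le n}t_0^{\ell(h)}\le C(t_0),
\]
whence $\#\{h:\ell(h)\le n\}\le C(t_0)\,t_0^{-n}\le a^n$ with $a=C(t_0)/t_0$.

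The only delicate point is this growth estimate, and inside it the role of the weights $k$: they force the support of any $h$ with $\ell(h)\le n$ into $\{1,\dots,n\}$ and, equivalently, make the infinite product converge. Without these factors a single nontrivial coordinate could be placed at infinitely many positions, so every $\ell$-ball of positive radius would already be infinite. As an alternative to the generating function one may argue directly: the number of ``distance profiles'' $\big(\d_k(1,g_k)\big)_k$ with $\sum_k k\,\d_k(1,g_k)\le n$ is bounded by a partition-type (hence subexponential) count, while each profile is realized by at most $\sigma^{\sum_k\d_k(1,g_k)}\le\sigma^n$ elements, again yielding a bound of the form $a^n$.
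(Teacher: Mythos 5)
Your proof is correct, but it takes a genuinely different route from the paper's. The paper handles the growth bound by an embedding trick: it first reduces to the case $G_1\cong G_2\cong\cdots\cong G$ (spheres in a quotient are images of spheres in $G$), and then maps $H$ into the restricted wreath product $G\wre\mathbb Z$ by sending $G_k$ to $t^kGt^{-k}$; the image of $h=(g_k)$ has word length at most $\sum_{g_k\ne 1}(2k+\d(1,g_k))\le 3\ell(h)$, so this is an injective Lipschitz map into a finitely generated group, whose growth is at most exponential --- and the bound follows at once. You instead count directly: the weights $k$ confine the support of any $h$ with $\ell(h)\le n$ to $\{1,\dots,n\}$, the crude word-count bound $\sigma^m$ on spheres of each quotient $G_k$ makes any reduction to identical factors unnecessary, and the generating-function (or partition-count) estimate yields an explicit base $a=C(t_0)/t_0$. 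What the paper's route buys is brevity and reuse of machinery already at hand (wreath products and the standard fact that finitely generated groups have at most exponential growth); what your route buys is a self-contained, purely combinatorial argument with explicit constants, and it also makes explicit a point the paper leaves implicit: $\ell$ is finite exactly on finitely supported tuples, so the ``direct product'' must be read as the restricted one. That reading is indeed the intended one --- it is forced in the paper's own proof, since the embedding into the restricted wreath product $G\wre\mathbb Z$ only makes sense for finitely supported tuples, and it is needed in the application (Corollary \ref{cor2}) to keep $H$ elementary amenable. Your coordinatewise verification of $(\mathbf{L_1})$--$(\mathbf{L_3})$ matches what the paper dismisses as obvious.
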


\begin{proof}
The fact that $\ell $ is a length function is obvious. To show that the growth of $H$ with respect to $\ell $ is at most exponential it suffices to deal with the case when $G_1\cong G_2\cong \ldots \cong G$. In the latter case, the map $H\to G \wre \mathbb Z$ that sends $G_k$ to $t^kGt^{-k}$, where $t$ is a generator of $\mathbb Z$, extends to a Lipschitz embedding of $H$ in  $G \wre \mathbb Z$. Indeed the length of the image of $h=(g_k)$ in $G \wre \mathbb Z$ does not exceed $\sum (2k+\d(1,g_k))$
over all $g_k\ne 1$, where $\d$ is the word metric on $G$ with respect to $x$. This in turn does not exceed  $\sum\limits_{k=1}^\infty 3k\d(1, g_k)=3\ell(h).$  Since the wreath product is finitely generated,  the claim is proved.
\end{proof}

\begin{proof}[Proof of Corollary \ref{cor2}]
Fix any function $ \rho \colon \mathbb R_+\to \mathbb R_+$ such that $\lim\limits_{x\to \infty}\rho (x)=\infty $. It suffices to prove the corollary for the function $\rho^\prime (x)=\inf\limits_{t\in [x, \infty)} \rho (t)$. Hence we can assume that $\rho$ is non-decreasing without loss of generality. Further let $\mathcal G= \{(G_k, \d _k)\}$ be the family of finite groups provided by Lemma \ref{ADS} and let $D_k=\diam (G_k)$ denote the diameter of $G_k$ with respect to $\d_k$. Since $D_k\to \infty $ as $k\to \infty $, passing to a subsequence of $\mathcal G$ if necessary we can assume that
\begin{equation}\label{subseq}
\rho (D_k) \ge k
\end{equation}
for all $k\in \mathbb N$.

Let $H=\prod_{k=1}^\infty  G_k$ be the direct product of all $G_k$ and let $\ell \colon H\to \mathbb N\cup\{0\}$ be the function on $H$ defined by (\ref{met}). Then, by Lemma \ref{exp}, $\ell$ is a length function on $H$ and the growth of $H$ with respect to $\ell $ is at most exponential. Note that $H$ is elementary amenable. Hence by Theorem \ref{main}, there exists an elementary amenable group $G$ containing $H$ and generated by a finite set $X$ and a constant $c>0$ such that for every $h\in H$, we have
\begin{equation}\label{hch}
c|h|_X\le \ell (h)\le |h|_X .
\end{equation}

Let $f$ be a Lipschitz embedding of $G$ in a uniformly convex Banach space $E$, $L$ its Lipschitz constant. Note that $\comp_{\frac1L f}\sim \comp_f$. Thus we can assume that $f$ is $1$-Lipschitz. By (\ref{hch}) and (\ref{met}), for every $u,v\in G_k\le G$ we have
$$
\| f(u)-f(v)\| \le \d_X (u,v) \le  \ell (u^{-1}v)/c = k \d_k (u,v)/c.
$$
Thus the embedding $G_k\to G$ composed with $f$ gives us a $(k/c)$-Lipschitz map $(G_k, d_k)\to E$. Rescaling and applying Lemma \ref{ADS}, we can find two elements $u,v\in G_k$ such that
\begin{equation}\label{comp1}
\| f(u) - f(v)\| \le kR/c
\end{equation}
and
\begin{equation}\label{comp2}
\d _X(u,v)\ge \ell (u^{-1}v)\ge \d _k(u,v)\ge \kappa D_k.
\end{equation}
By the definition of the compression function, the inequalities (\ref{comp1}) and (\ref{comp2}) imply
$$
\comp_f (\kappa D_{k})  \le kR/c.
$$
Finally, for any large enough $x\in \mathbb R_+$, we have $\kappa D_{k-1} \le x\le \kappa D_{k}$ for some $k\ge 2$. Using (\ref{subseq}) and the fact that $\rho $ is non-decreasing, we obtain
$$
\comp_f(x)\le \comp_f (\kappa D_{k})  \le \frac{kR}{c} \le \frac{2(k-1)R}{c}\le \frac{2R \rho (D_{k-1})}c\le \frac{2R}c \rho \left(\frac{x}{\kappa}\right).
$$
Thus $\comp _f \preceq \rho .$
\end{proof}

\paragraph{F\o lner functions.}
We recall the definition of a F\o lner function of an amenable group introduced by Vershik \cite{Ver}. A finite subset $A$ of a finitely generated group $G$ is $\e $-F\o lner (with respect to a fixed finite generating set $X$ of $G$) if
$$
\sum\limits_{x\in X} |Ax \triangle A| \le \e |A|,
$$
where $\triangle$ denotes symmetric difference. The F\o lner function of an amenable group $G$ (with
respect to $X$) is defined by
$$
F\o l_{X,G}(n) = \min\{|A| : A \subseteq G\; {\rm is\; a \;
{\it 1/n}-F\o lner\; w.r.t.\;} G\} .
$$
Up to the standard equivalence relation induced by $\preceq$, $F\o l_{X,G}$ is independent of the choice of a finite generating set $X$. The corresponding equivalence class is denoted by $F\o l_{G}$.

Let $\mathcal C$ be a class of groups. By a \emph{rank function} $\rho $ on $\mathcal C$ we mean a map $\rho \colon \mathcal C \to P$, where $P$ is a poset, such that the following conditions hold.
\begin{enumerate}
\item[($\bf R_1$)] For every $G\in \mathcal C$ and every $H\le G$, if $H\in \mathcal C$ then $\rho (H)\le \rho (G)$.
\item[($\bf R_2$)] For every $G\in \mathcal C$ and every quotient group $Q$ of $G$, if $Q\in \mathcal C$ then $\rho (Q)\le \rho (G)$.
\end{enumerate}
A rank function $\rho\colon \mathcal C\to P$ is {\it unbounded} if $\rho (\mathcal C)$ has no largest element.

\begin{ex}
\begin{enumerate}
\item[(a)] The derived length is an unbounded rank function on the class of solvable groups.
\item[(b)] Similarly the function $\c$ defined below is a rank function on the class of elementary amenable groups which takes values in ordinal numbers. Indeed the conditions ($\bf R_1$) and ($\bf R_2$) follow from Lemma \ref{Chou}. If we restrict our attention to the set of countable elementary amenable groups, then Corollary \ref{EG} shows that $\c $ is unbounded.
\end{enumerate}
\end{ex}

Recall that a group $G$ is \emph{$SQ$-universal } if every countable group can be embedded into a quotient of $G$. The Higman-Neumann-Neumann theorem discussed in the introduction may be restated as follows: the free group of rank $2$ is $SQ$-universal. Given a class of groups $\mathcal C$, we say that a group $G\in \mathcal C$ is \emph{$SQ$-universal group in the class $\mathcal C$} if every countable group from $C$ embeds in a quotient of $G$.

\begin{lem}\label{obvious}
Let $\mathcal C$ be a class of countable groups with an unbounded rank function.
\begin{enumerate}
\item[(a)] There is no $SQ$-universal group in the class $\mathcal C$.

\item[(b)] If, in addition, every countable family of groups from $\mathcal C$ embeds simultaneously into a group from $\mathcal C$, then for any $\sigma \in \rho (\mathcal C)$, there exists an uncountable chain in $\rho (\mathcal C)$ with minimal element $\sigma$.
\end{enumerate}
\end{lem}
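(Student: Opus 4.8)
For part (a) I would argue by contradiction. Suppose some $G\in\mathcal C$ were $SQ$-universal in $\mathcal C$; the plan is to show that then $\rho(G)$ would be the largest element of $\rho(\mathcal C)$, contradicting unboundedness. Given an arbitrary $H\in\mathcal C$, $SQ$-universality provides a normal subgroup $N\lhd G$ together with an embedding $H\hookrightarrow G/N$. Applying $(\mathbf{R_2})$ to the quotient $G/N$ of $G$ and then $(\mathbf{R_1})$ to the subgroup $H$ of $G/N$, I obtain $\rho(H)\le\rho(G/N)\le\rho(G)$. The only point requiring care is that the intermediate groups lie in $\mathcal C$, so that $(\mathbf{R_1})$ and $(\mathbf{R_2})$ may be invoked; this is automatic in the settings where the lemma is used, since there $\mathcal C$ (solvable groups, elementary amenable groups) is closed under subgroups and quotients. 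As $H$ was arbitrary, $\rho(G)$ would bound $\rho(\mathcal C)$ from above and hence be its largest element, which is impossible.

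For part (b) the plan is a transfinite construction of length $\omega_1$. Fix $\sigma\in\rho(\mathcal C)$ and choose $G_0\in\mathcal C$ with $\rho(G_0)=\sigma$. I will define, by recursion on countable ordinals $\alpha<\omega_1$, groups $G_\alpha\in\mathcal C$ so that the values $\sigma_\alpha:=\rho(G_\alpha)$ form a strictly increasing chain. At a stage $\alpha>0$ the groups $\{G_\beta\}_{\beta<\alpha}$ form a countable family, since $\alpha$ is a countable ordinal; hence the additional hypothesis yields a group $\widehat G\in\mathcal C$ into which all of them embed, and $(\mathbf{R_1})$ gives $\sigma_\beta\le\rho(\widehat G)$ for every $\beta<\alpha$. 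By unboundedness $\rho(\widehat G)$ is not the largest element, so there is $G'\in\mathcal C$ with $\rho(G')\not\le\rho(\widehat G)$. Embedding $\widehat G$ and $G'$ simultaneously into a single $G_\alpha\in\mathcal C$ (again by the additional hypothesis) and applying $(\mathbf{R_1})$, I get $\rho(\widehat G)\le\rho(G_\alpha)$ and $\rho(G')\le\rho(G_\alpha)$.

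The decisive point is that the first of these inequalities is in fact strict: if $\rho(\widehat G)=\rho(G_\alpha)$, then $\rho(G')\le\rho(G_\alpha)=\rho(\widehat G)$, contradicting the choice of $G'$. Hence $\sigma_\alpha=\rho(G_\alpha)>\rho(\widehat G)\ge\sigma_\beta$ for all $\beta<\alpha$, which preserves the induction hypothesis. Pushing the recursion through all $\alpha<\omega_1$ produces a strictly increasing, hence injective, family $\{\sigma_\alpha\}_{\alpha<\omega_1}$; indexed by $\omega_1$, it is an uncountable chain in $\rho(\mathcal C)$, and since $\sigma_0=\sigma\le\sigma_\alpha$ for all $\alpha$, its minimal element is $\sigma$.

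I expect the main obstacle to lie in part (b), in securing a \emph{strict} increase at every stage. Monotonicity through the simultaneous-embedding group only yields $\sigma_\beta\le\rho(\widehat G)$, and a direct appeal to unboundedness produces merely an element $G'$ incomparable to $\widehat G$; the trick of embedding $\widehat G$ together with such a $G'$ into one group of $\mathcal C$ is exactly what converts ``$\rho(G')\not\le\rho(\widehat G)$'' into ``$\rho(G_\alpha)$ strictly above $\rho(\widehat G)$.'' One must also verify that the index family stays countable at each stage, which is precisely what allows the recursion to run up to $\omega_1$ (and explains why it stops there).
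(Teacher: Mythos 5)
Your proof is correct, and for part (b) it takes a genuinely different route from the paper's. The paper argues indirectly: it first records the same key observation you use --- by the simultaneous-embedding hypothesis and ($\mathbf{R_1}$), every countable subset of $\rho(\mathcal C)$ has an upper bound --- then assumes for contradiction that every chain with minimal element $\sigma$ is countable, applies Zorn's lemma to get a maximal element $\mu$ of $\{\tau\in\rho(\mathcal C)\mid \tau\succeq\sigma\}$, and uses the observation once more to conclude that $\mu$ would be the largest element of $\rho(\mathcal C)$, contradicting unboundedness. You instead build the chain explicitly by transfinite recursion of length $\omega_1$; your pair-embedding trick --- placing $\widehat G$ together with a witness $G'$ satisfying $\rho(G')\not\le\rho(\widehat G)$ inside a single group of $\mathcal C$ --- is the substitute for the paper's maximality argument, and it is exactly what converts incomparability into strict increase. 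The paper's proof is shorter; yours is more informative, since it produces a well-ordered chain of order type exactly $\omega_1$ rather than merely an uncountable chain, and it makes visible that unboundedness is needed cofinally (at every stage) rather than once at the end. Both arguments use the axiom of choice, yours through the recursive choices and the paper's through Zorn.

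For part (a), the paper simply declares the statement obvious; your chain of inequalities $\rho(H)\le\rho(G/N)\le\rho(G)$ is surely the intended argument, and your caveat that $G/N$ must lie in $\mathcal C$ for ($\mathbf{R_1}$) and ($\mathbf{R_2}$) to apply is a real one, not a formality. Without some closure assumption the statement can actually fail: let $\mathcal C$ consist of the free group $F_2$ together with the groups $A_n=\bigoplus_{i=1}^{\infty}\mathbb Z/p_n\mathbb Z$ for distinct primes $p_n$. No member of $\mathcal C$ is isomorphic to a subgroup or a quotient of a different member (the $A_n$ are not finitely generated and not free, and have pairwise distinct prime exponents), so ($\mathbf{R_1}$) and ($\mathbf{R_2}$) impose only trivial constraints, and $\rho(F_2)=0$, $\rho(A_n)=n$ is an unbounded rank function; yet $F_2$ is SQ-universal in $\mathcal C$ by the Higman--Neumann--Neumann theorem. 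So the closure hypothesis you flag --- which does hold in all of the paper's applications --- is precisely what makes (a) true, and on this point you are more careful than the source.
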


\begin{proof}
The first statement is obvious from the definition. To prove (b) we first note the following.

\medskip

\noindent ($\ast$)\;\;\; Every countable subset of $\rho(C)$ has an upper bound.

\medskip

\noindent Indeed given a countable subset $c\in \rho (C)$, consider the groups $G_1, G_2, \ldots $ from $\mathcal C$ such that $\rho (\{ G_1, G_2, \ldots \})=c$. Let $G\in \mathcal C$ be a group which contains all $G_i$'s. Then $\rho (G)$ is an upper bound for $c$.

Suppose now that every chain in $\rho (\mathcal C)$ with minimal element $\sigma$ is countable. Then ($\ast$) and the Zorn Lemma imply that the set $\{\tau\in \rho (\mathcal C)\mid \tau \succeq \sigma \}$ contains a maximal element $\mu$. Using ($\ast$) again, we conclude that $\mu $ the largest element in $\rho (\mathcal C)$, which contradicts our assumption.
\end{proof}

Let $F\o l$ denote the function which maps a finitely generated amenable group $G$ to $F\o l_G$.  Note that the relation $\preceq$ defined in the introduction is a preorder, and hence induces a partial order on the set of corresponding equivalence classes. We denote this order also by $\preceq$. It was proved by Erschler in \cite{Ers03} that with respect to $\preceq$, $F\o l$ is a rank function on the class of finitely generated amenable groups. In another paper \cite{Ers}, she also showed that $F\o l$ is unbounded on the class of finitely generated amenable groups. Moreover, is is unbounded on the class of finitely generated groups of subexponential growth. As a consequence of Corollary \ref{cor2}, we obtain below that $F\o l$ is unbounded on the class of elementary amenable groups.

\begin{proof}[Proof of Corollary \ref{cor3}]
There is a standard construction of a uniform embedding of amenable groups into a Hilbert space, where the compression of the embedding is controlled by the F\o lner functions (see, e.g., \cite{Yu}). That is, if $F\o l$ was bounded on the class of finitely generated elementary amenable groups, there would exist Lipschitz embeddings of every finitely generated elementary amenable group into a Hilbert space with compression function $\succeq \rho$ for some $\rho $ satisfying the conditions of Corollary \ref{cor2}. A contradiction.
\end{proof}

\begin{rem}
Lemma \ref{obvious} also implies that for every $\sigma \colon \mathbb N\to \mathbb N$, there is an uncountable chain of F\o lner functions of elementary amenable groups bounded by $\sigma$ from below.
As a corollary of Lemma \ref{obvious} and Corollary \ref{cor3}, we also obtain that, unlike in the class of all groups, there are no $SQ$-universal groups in the classes of elementary amenable groups and amenable groups.
\end{rem}

\paragraph{Elementary classes.} Recall that the class of amenable groups is closed under the following operations \cite{vN}:
\begin{enumerate}
\item[(S)] Taking subgroups.
\item[(Q)] Taking quotients.
\item[(E)] Taking extensions.
\item[(U)] Taking direct unions.
\end{enumerate}
The class of elementary amenable groups $EA$ was defined by Chou \cite{Cho} as the smallest class containing all finite and abelian groups and closed under the four operations (S)--(U). Alternatively, one can define $EG$ inductively as follows. Let $EG_0$ be the class of all finite and abelian groups. Let $\alpha $ be an ordinal. If $\alpha $ is limit, define $EG_\alpha =\bigcup_{\beta<\alpha} EG_\beta$. If $\alpha $ is a successor ordinal, let $EG_\alpha $ be the class of groups that can be obtained from groups in $EG_{\alpha -1}$ by applying (E) or (U) once. The following lemma summarizes some results of \cite{Cho}.

\begin{lem}[Chou]\label{Chou}
\begin{enumerate}
\item[(a)] For every ordinal $\alpha $, $EG_\alpha$ is closed under (S) and (Q).
\item[(b)] $EA=\bigcup_{\alpha} EG_\alpha$, where the union is taken over all ordinals.
\end{enumerate}
\end{lem}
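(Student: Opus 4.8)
The plan is to prove (a) by transfinite induction on $\alpha$, establishing closure under (S) and (Q) \emph{simultaneously}, and then to deduce (b) by showing that $\mathcal U:=\bigcup_\alpha EG_\alpha$ is exactly the smallest class containing the finite and abelian groups and closed under all four operations (S)--(U). Throughout I will use the second and third isomorphism theorems freely.

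For part (a), the base case $\alpha=0$ is immediate: subgroups and quotients of finite groups are finite, and subgroups and quotients of abelian groups are abelian. At a limit ordinal $\alpha$, closure of $EG_\alpha=\bigcup_{\beta<\alpha}EG_\beta$ under (S) and (Q) is inherited from the inductive hypothesis, since a subgroup or quotient of a group in some $EG_\beta$ again lies in $EG_\beta\subseteq EG_\alpha$. The successor step is the substantive one. Assume $EG_{\alpha-1}$ is closed under (S) and (Q), and take $G\in EG_\alpha$. First suppose $G$ arises by (E), so there is $N\lhd G$ with $N,\,G/N\in EG_{\alpha-1}$. For closure under (S), given $K\le G$ one has $K\cap N\le N$, hence $K\cap N\in EG_{\alpha-1}$, while $K/(K\cap N)\cong KN/N\le G/N$, hence $K/(K\cap N)\in EG_{\alpha-1}$ by the inductive closure under (S); thus $K$ is again an extension and $K\in EG_\alpha$. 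For closure under (Q), given $M\lhd G$ the image $NM/M\cong N/(N\cap M)$ is a quotient of $N$, hence in $EG_{\alpha-1}$, and $(G/M)/(NM/M)\cong G/NM$ is a quotient of $G/N$, hence in $EG_{\alpha-1}$ by inductive closure under (Q); so $G/M\in EG_\alpha$. If instead $G=\bigcup_i G_i$ arises by (U) from $G_i\in EG_{\alpha-1}$, then for $K\le G$ one has $K=\bigcup_i(K\cap G_i)$ as a direct union of groups $K\cap G_i\in EG_{\alpha-1}$, and for $M\lhd G$ one has $G/M=\bigcup_i G_iM/M$ with each $G_iM/M\cong G_i/(G_i\cap M)\in EG_{\alpha-1}$; in both cases $K$ and $G/M$ lie in $EG_\alpha$.

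For part (b), the inclusion $\mathcal U\subseteq EA$ is a transfinite induction showing $EG_\alpha\subseteq EA$ for all $\alpha$, using only that $EA$ contains $EG_0$ and is closed under (E) and (U). For the reverse inclusion, by minimality of $EA$ it suffices to verify that $\mathcal U$ contains the finite and abelian groups and is closed under (S), (Q), (E), (U); the first two closures are immediate from part (a). Before treating (E) and (U) I record monotonicity $EG_\beta\subseteq EG_\alpha$ for $\beta\le\alpha$, proved by a short transfinite induction: any $G\in EG_\beta$ is the extension of itself by the trivial group, hence $G\in EG_{\beta+1}$, and limit stages are clear. Then for (E), an extension with kernel in $EG_\beta$ and quotient in $EG_\gamma$ has both pieces in $EG_{\max(\beta,\gamma)}$, so the total group lies in $EG_{\max(\beta,\gamma)+1}\subseteq\mathcal U$; for (U), a direct union of groups $G_i\in EG_{\alpha_i}$ has all $G_i$ in $EG_\alpha$ with $\alpha=\sup_i\alpha_i$ (a genuine ordinal, since $\{\alpha_i\}$ is a set), whence the union lies in $EG_{\alpha+1}\subseteq\mathcal U$. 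This gives $EA\subseteq\mathcal U$ and completes (b).

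I expect the main obstacle to be the successor step of (a), where closure under (S) and (Q) must be carried along \emph{together}: the subgroup computation for an extension places $KN/N$ in $EG_{\alpha-1}$ using closure under (S) one level down, while the dual quotient computation uses closure under (Q) one level down. This coupling is precisely what forces the two statements to be proved by a single simultaneous induction rather than separately, and verifying that the resulting groups are genuinely built by one application of (E) or (U) from $EG_{\alpha-1}$ (rather than requiring a higher level) is the crux.
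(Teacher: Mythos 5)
Your proof is correct and complete; note that the paper gives no proof of this lemma at all (it is stated as a summary of results from Chou's paper \cite{Cho}), and your transfinite induction is exactly the standard argument, so there is nothing to compare beyond confirming its validity. One small inaccuracy in your closing remark: the induction need not be simultaneous, because in the successor step the (S)-closure argument (for both the extension and the direct-union case) invokes only (S)-closure of $EG_{\alpha-1}$, and the (Q)-closure argument invokes only (Q)-closure of $EG_{\alpha-1}$, so the two inductions in fact decouple and could be run separately.
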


Given a group $G\in EA$, define the \emph{elementary class} of $G$, $\c (G)$ as the smallest ordinal $\alpha $ such that $G\in EG_\alpha $. The following three observations are quite elementary.

\begin{lem}\label{succ}
For any group $G$, $\c (G)$ is a non-limit ordinal or $0$. If $G$ is countable, then $\c (G)<\omega _1$. If $G$ is finitely generated, then $\c (G)$ is $0$ or a successor of a non-limit ordinal.
\end{lem}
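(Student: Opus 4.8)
Looking at Lemma \ref{succ}, I need to prove three separate claims about the elementary class $\c(G)$.

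The plan is to establish each of the three assertions separately, working directly from the inductive definition of the hierarchy $EG_\alpha$ and from Chou's Lemma \ref{Chou}.

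\textbf{First assertion: $\c(G)$ is never a nonzero limit ordinal.} The key observation is that for a limit ordinal $\alpha$, the class $EG_\alpha = \bigcup_{\beta<\alpha} EG_\beta$ is defined as a union, so if $G\in EG_\alpha$ then already $G\in EG_\beta$ for some $\beta<\alpha$. Thus no group can have $EG_\alpha$ as the \emph{smallest} class containing it when $\alpha$ is limit: if $\c(G)=\alpha$ were limit and positive, membership $G\in EG_\alpha$ would force $G\in EG_\beta$ for some $\beta<\alpha$, contradicting minimality of $\alpha$. Hence $\c(G)$ is either $0$ or a successor ordinal, which is what ``non-limit ordinal or $0$'' means.

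\textbf{Second assertion: if $G$ is countable, then $\c(G)<\omega_1$.} Here I would argue by induction on the structure of the derivation of $G$ within $EA=\bigcup_\alpha EG_\alpha$. A countable group is built from countably many constituent groups via the operations (S), (Q), (E), (U), each of which, when applied to countable groups, yields a countable group and raises the elementary class by at most one successor step (for (E) and (U)) or not at all (for (S), (Q), by Lemma \ref{Chou}(a)). The crucial point is that $G$ is assembled from a \emph{countable} collection of simpler pieces, each of which is itself a countable elementary amenable group; by transfinite induction each piece has elementary class below $\omega_1$, and since $\omega_1$ is regular (a countable supremum of countable ordinals is countable), the supremum of the classes of the pieces, plus one, remains below $\omega_1$. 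I expect this to be the main obstacle: one must carefully formalize ``$G$ is built from countably many pieces'' so that the regularity of $\omega_1$ can be applied, ruling out the possibility that assembling countably many groups pushes the class up to $\omega_1$.

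\textbf{Third assertion: if $G$ is finitely generated, then $\c(G)$ is $0$ or a successor of a non-limit ordinal.} By the first assertion we already know $\c(G)$ is $0$ or a successor $\alpha+1$; I must rule out the case where $\alpha$ itself is a limit ordinal. Suppose $\c(G)=\alpha+1$ with $\alpha$ limit. Then $G\in EG_{\alpha+1}$ but $G\notin EG_\alpha=\bigcup_{\beta<\alpha}EG_\beta$. Since $G\in EG_{\alpha+1}$, it is obtained from groups in $EG_\alpha$ by a single application of (E) or (U). If $G$ is finitely generated and arises as a direct union (U) of groups in $EG_\alpha$, then finite generation forces $G$ to equal one of the members of the directed system, placing $G\in EG_\alpha$ directly, a contradiction. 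If instead $G$ arises as an extension (E) with normal subgroup $K$ and quotient $Q=G/K$, both in $EG_\alpha=\bigcup_{\beta<\alpha}EG_\beta$, then $Q$ is finitely generated (being a quotient of $G$) and hence lies in some single $EG_{\beta_0}$ with $\beta_0<\alpha$; the subgroup $K$, however, need not be finitely generated, so I would bound its class using $\alpha$ being limit to find $\beta_1<\alpha$ with $K\in EG_{\beta_1}$, and then take $\beta=\max(\beta_0,\beta_1)<\alpha$ to conclude $G\in EG_{\beta+1}$ with $\beta+1<\alpha$ (since $\alpha$ is limit), again contradicting minimality. The delicate step is handling the non-finitely-generated normal subgroup $K$ in the extension case, where I must exploit the limit structure of $\alpha$ rather than finite generation of $G$ directly.
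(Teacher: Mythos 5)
Your first and third assertions are handled correctly and essentially as in the paper: the first is immediate from $EG_\alpha=\bigcup_{\beta<\alpha}EG_\beta$ for limit $\alpha$, and for the third your two cases (a finitely generated group arising as a direct union must equal one member of the directed family; an extension with both kernel and quotient in $EG_\alpha=\bigcup_{\beta<\alpha}EG_\beta$ lands in some $EG_{\beta+1}$ with $\beta+1<\alpha$) are exactly the paper's argument. One small remark there: finite generation of $Q$ plays no role in the extension case; $Q\in EG_\alpha$ already gives $Q\in EG_{\beta_0}$ for some $\beta_0<\alpha$ simply because $EG_\alpha$ is a union for limit $\alpha$, just as for $K$.

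The second assertion, however, has a genuine gap, and it is precisely the one you flag but do not resolve. The inductive definition of $EG_{\delta+1}$ uses only (E) and (U) (the operations (S) and (Q) do not occur in derivations; Chou's lemma only says each $EG_\alpha$ is closed under them), and in case (U) a countable group $G$ with $\c(G)=\delta+1$ is a direct union of a directed family $\{H_i\}_{i\in I}$ of subgroups with $H_i\in EG_\delta$. Each $H_i$ is countable and, by your induction, satisfies $\c(H_i)<\omega_1$; but the index set $I$ may be uncountable, and a supremum of uncountably many countable ordinals can equal $\omega_1$. So your claim that ``$G$ is assembled from a countable collection of pieces'' is exactly what needs proof, and without it the appeal to regularity of $\omega_1$ has nothing to act on. The missing step --- the heart of the paper's proof --- is a chain extraction: enumerate $G=\{1,g_1,g_2,\ldots\}$, pick any $H_{i_0}$ in the family, and, using directedness of the union, pick $H_{i_{k+1}}$ containing both $H_{i_k}$ and $g_k$; then $G=\bigcup_k H_{i_k}$ is a countable increasing union, $\beta=\sup_k \c(H_{i_k})<\omega_1$ by regularity, and $\c(G)\le\beta+1<\omega_1$. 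With this insertion (the extension case is easy and needs no such device, since kernel and quotient are single countable groups of smaller class) your induction on $\c(G)$ closes; the paper runs the same argument in contrapositive form, proving by transfinite induction that every group of class $\omega_1+\alpha$, $\alpha\ge 1$, is uncountable.
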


\begin{proof}
The first claim obviously follows from the definition of $EG_\alpha$ for a limit ordinal $\alpha$.

Further suppose that $G$ is countable and $\c (G)=\omega_1 +1$. Note that $G$ cannot split as an extension $$ 1\to N\to G\to Q\to 1,$$ where $\c (N)<\omega _1$ and $\c(Q)<\omega _1$. Indeed otherwise $N,Q\in EG_\beta$ for some $\beta <\omega _1 $ and hence $\c (G)\le \beta +1<\omega _1 $. Thus $G$ is a direct union of a family of groups $\mathcal H=\{ H_i\} _{i\in I}$ such that $\c(H_i)=\beta_i$ is countable. We enumerate the group $G=\{ 1, g_1, g_2, \ldots \}$ and choose a chain $\{ H_0, H_1, \ldots \}\subseteq \mathcal H$ as follows.  Let $H_0$ be any subgroup from $\mathcal H$. If $H_i$ is already chosen, let $H_{i+1}$ be any subgroup from $\mathcal H$ that contains $H_i$ and $g_i$. Obviously $G=\bigcup_{i=0}^\infty H_i$ and $\beta =\sup\limits_{i\in \mathbb N\cup\{0\}} \beta_i$ is countable. Consequently, $\c(G)\le \beta+1<\omega_1$ and we get a contradiction again.

Thus every group $G$ with $\c (G)=\omega_1 +1$ is uncountable. Using the same arguments as in the previous paragraph, it is now easy to show by transfinite induction that for every ordinal $\alpha \ge 1$, every group $G$ with $\c (G)=\omega_1 +\alpha $ is uncountable.

Finally suppose that $G$ is finitely generated and $\c (G)=\alpha +1$, where $\alpha $ is limit. As above we can show that $G$ cannot split as an extension of a group from $EG_{\alpha}$ by  a group from $EG_{\alpha}$. Hence $G$ is a direct union of groups $\{H_i\}_{i\in I}$ from $EG_\alpha $. Since $G$ is finitely generated, $G=H_i$ for some $i$ and hence $c(G)\le \alpha $. A contradiction.
\end{proof}

In what follows, we denote by $G^\omega $ the direct product of countably many copies of $G$.

\begin{prop}\label{EGprop}
Let $\alpha $ be a countable limit ordinal.
\begin{enumerate}
\item[(a)] There exists a countable group $H$ such that $\c (H)=\alpha +1$ and for every $K\in \mathcal E(H)$ we have $\c (K)\le \alpha +1$.
\item[(b)] For every $n\in \mathbb N$, $n\ge 2$, there exists a finitely generated group $L$ such that $\c (L)=\c (L^\omega)= \alpha +n$.
\end{enumerate}
\end{prop}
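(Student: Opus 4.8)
The plan is to deduce both statements from Theorem~\ref{main} and the closure properties of the classes $EG_\beta$, running the whole argument as one step of a transfinite induction and using throughout that $\alpha$ is a \emph{limit} ordinal. The two facts I will lean on are: for a limit ordinal $\lambda$ the class $EG_\lambda=\bigcup_{\beta<\lambda}EG_\beta$ is closed under finite direct products and under extensions (a single extension raises the index by one, and $\beta+1<\lambda$ when $\beta<\lambda$), whereas each $EG_\beta$ is closed under subgroups and quotients by Lemma~\ref{Chou}(a).

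For part (a) I would build $H$ as a direct sum. Fix an increasing sequence $\beta_1<\beta_2<\cdots$ with $\sup_k\beta_k=\alpha$ and countable groups $G_k$ with $\c(G_k)=\beta_k$ (available for $\beta_k<\alpha$ by the inductive hypothesis), and set $H=\bigoplus_k G_k$. As the direct union of the finite partial products $G_1\times\cdots\times G_m$, each of which lies in $EG_\alpha$ by product-closure, $H$ lies in $EG_{\alpha+1}$; since $H$ contains groups of class $\beta_k$ for every $k$, we get $\c(H)\ge\alpha$, so $\c(H)=\alpha+1$ because $\c(H)$ is non-limit by Lemma~\ref{succ}. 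The decisive point for the $\mathcal E(H)$-bound is that every finitely generated subgroup of any power $H^m$ already lies in $EG_\alpha$: it is contained in $(G_1\times\cdots\times G_j)^m\in EG_\alpha$ for some $j$. Hence if $K\in\mathcal E(H)$, every finitely generated subgroup of $K$ embeds into some $H^m$ and so lies in $EG_\alpha$, whence $K$, being their direct union, lies in $EG_{\alpha+1}$ and $\c(K)\le\alpha+1$.

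For part (b) I start with $n=2$. Equip $H$ with a length function of at most exponential growth (every countable group admits one, as already used for Corollary~\ref{cor1}) and apply Theorem~\ref{main}, getting a finitely generated $G\supseteq H$ with $G_1\lhd G_2\lhd G$, $G_1$ abelian, $G_2/G_1\in\mathcal E(H)$, and $G/G_2$ solvable of derived length $\le 3$. A finitely generated subgroup of $G_2$ is an extension of a finitely generated subgroup of $G_1$ (in $EG_0\subseteq EG_\alpha$) by a finitely generated subgroup of $G_2/G_1\in\mathcal E(H)$, which lies in $EG_\alpha$ by part (a); as $\alpha$ is limit and $EG_\alpha$ is extension-closed, this subgroup lies in $EG_\alpha$. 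Therefore $G_2\in EG_{\alpha+1}$, and since $G/G_2\in EG_2$ we obtain $G\in EG_{\alpha+2}$. For the lower bound, $\c(G)\ge\c(H)=\alpha+1$, and $\alpha+1$ is the successor of a limit, a value forbidden for finitely generated groups by Lemma~\ref{succ}; hence $\c(G)=\alpha+2$. Running the same analysis on $G^\omega$, whose factors are $(G_2)^\omega$ (again with all finitely generated subgroups in $EG_\alpha$) and $(G/G_2)^\omega$ (still solvable of derived length $\le 3$), gives $\c(G^\omega)=\alpha+2$. So $L_2=G$ works.

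For general $n\ge 2$ I would iterate, setting $L_{n+1}=L_n\wr\mathbb Z$ (restricted wreath product) and maintaining the invariant $\c(L_n)=\c(L_n^\omega)=\alpha+n$. The upper bounds are immediate from the invariant: the base $\bigoplus_{\mathbb Z}L_n$ embeds in $L_n^\omega\in EG_{\alpha+n}$, so $L_{n+1}=(\bigoplus_{\mathbb Z}L_n)\rtimes\mathbb Z\in EG_{\alpha+n+1}$, and similarly $(L_{n+1})^\omega\cong(\bigoplus_{\mathbb Z}L_n)^\omega\rtimes\mathbb Z^\omega$ has base embeddable in $L_n^\omega$, giving $\c(L_{n+1}^\omega)\le\alpha+n+1$; moreover $L_{n+1}$ is finitely generated. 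The main obstacle is the lower bound $\c(L_{n+1})\ge\alpha+n+1$, that is, $L_{n+1}\notin EG_{\alpha+n}$. The device of the base case fails here: for $n\ge 2$ the ordinal $\alpha+n+1$ is the successor of the \emph{non}-limit ordinal $\alpha+n$, so Lemma~\ref{succ} excludes no intermediate value and the strict increase must be proved directly. I would assume $L_{n+1}\in EG_{\alpha+n}$; as $L_{n+1}$ is finitely generated the top operation may be taken to be an extension $1\to N\to L_{n+1}\to Q\to 1$ with $N,Q\in EG_{\alpha+n-1}$, and then analyse the shift-invariant subgroup $B\cap N$ of the base $B=\bigoplus_{\mathbb Z}L_n$, descending so as to contradict $\c(L_n)=\alpha+n$ and ultimately bottoming out at the limit ordinal $\alpha$, where extension-closure of $EG_\alpha$ forces the collapse. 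Making this descent rigorous—showing that wreathing with $\mathbb Z$ strictly raises the elementary class past the non-limit value $\alpha+n$—is the technical heart; once it is in place, $\c(L_{n+1})\ge\alpha+n+1$ combines with the upper bounds and with $\c(L_{n+1}^\omega)\ge\c(L_{n+1})$ to close the induction.
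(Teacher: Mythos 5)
Your part (a) and the base case $n=2$ of part (b) follow the paper's own argument almost verbatim (the paper passes to $L=G/G_1$, whose kernel $G_2/G_1$ lies in $\mathcal E(H)$ by Theorem \ref{main}(b), while you keep $G$ itself and absorb the abelian kernel $G_1$ inside each finitely generated subgroup using that $\alpha$ is a limit; both work, though note that your claim that all finitely generated subgroups of $(G_2)^{\omega}$ lie in $EG_\alpha$ is asserted rather than proved -- unrestricted countable powers of groups in $\mathcal E(H)$ are not automatically handled by the finite-support argument). The genuine gap is the inductive step $n\ge 3$: you set $L_{n+1}=L_n\wre\mathbb Z$ and explicitly leave the lower bound $\c(L_{n+1})\ge\alpha+n+1$ unproved, calling it ``the technical heart.'' That lower bound \emph{is} the entire content of the step, so the proof is incomplete exactly where it matters.

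Moreover, the choice of $\mathbb Z$ as the acting group destroys the one mechanism known to prove such a bound. The paper takes $L=L_0\wre L_0$, wreathing $L_0$ with \emph{itself}: in a putative extension $1\to N\to L\to Q\to 1$ with $N,Q\in EG_{\alpha+n-2}$, the quotient $Q$ cannot contain a copy of $L_0$ (Lemma \ref{Chou}(a), since $\c(L_0)=\alpha+n-1$), so $N$ must meet the \emph{acting} copy of $L_0$ nontrivially; picking $1\ne a$ in that intersection and letting $B$ be the base copy of $L_0$, normality gives $D=[a,B]\le N$, and because $aBa^{-1}$ and $B$ sit in distinct coordinates of the base and commute, projection shows $D$ surjects onto $L_0$, whence $\c(N)\ge\alpha+n-1$, a contradiction. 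With $\mathbb Z$ on top this lever vanishes: $\mathbb Z$ is abelian, so nothing prevents $Q$ from containing the image of the acting group, and all one can force is $N\cap B\ne 1$ for $B=\bigoplus_{\mathbb Z}L_n$. A nontrivial $a\in N\cap B$ yields nothing comparable: its commutator with a base element supported off the support of $a$ is trivial, and its commutator with an element in a coordinate $k$ of its support stays inside that single coordinate, giving only the subgroup generated by $[a_k,L_n]$ there -- which can even be trivial (if $Z(L_n)\ne 1$, the kernel of the summation map $\bigoplus_{\mathbb Z}Z(L_n)\to Z(L_n)$ is a nontrivial shift-invariant normal subgroup of $L_n\wre\mathbb Z$ that is abelian). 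So your proposed ``descent'' has no engine, and it is not even clear that $\c(L_n\wre\mathbb Z)=\c(L_n)+1$ holds in general. To close the induction you should replace $L_n\wre\mathbb Z$ by $L_n\wre L_n$ and run the commutator argument above; then the upper bounds you already have, together with $\c(L^{\omega})\le\c(L_n^{\omega})+1$ for $L=L_n\wre L_n$, finish the step as in the paper.
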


\begin{proof}
We first prove (a) by transfinite induction. Note that one can find ordinals $\beta_1, \beta_2, \ldots  <\alpha$ indexed by natural numbers such that $\sup_i \beta_i=\alpha$ and groups $H_1, H_2, \ldots $ such that $\c (H_i)=\beta _i$ for any $i\in \mathbb N$. Indeed this follows from the inductive hypothesis if $\alpha >\omega $. If $\alpha =\omega $, we can take $\beta_i=i$ and $H_i$ to be a non-cyclic free solvable groups of derived length $2^n$; it is easy to see  that $\c(H_n)=n$.

Let $H=\prod_{i=1}^\infty H_i$ be the direct product of $H_i$'s. Then $H$ is a direct union of groups $\prod_{i=1}^n H_i$ which all have classes less than $\alpha$. Hence $\c (H)\le \alpha +1$. On the other hand, by the first part of Lemma \ref{Chou}, we have $\c (H)\ge c(H_i)$ for every $i\in \mathbb N$ and therefore $\c(H)\ge \alpha $. Now Lemma \ref{succ} implies that $\c (H)=\alpha +1$.

Let us show that $\c(K)\le \alpha +1$ for every $K\in \mathcal E(H)$. As $K$ is the direct union of its finitely generated subgroups, it suffices to show that every finitely generated subgroup $S$ of a direct power of $H$ satisfies $\c (S)\le \alpha $. Since $S$ is finitely generated, it is a subgroup of a direct product of finitely many $H_i$'s. The latter product has elementary class less than $\alpha$ and hence $\c(S)< \alpha $ by the first part of Lemma \ref{Chou}. This completes the proof of (a).

To prove (b) we have to consider two cases. First assume that $n=2$. Let $H$ be the countable group of elementary class $\alpha +1$ provided by the first part of the proposition. Let $G$ be the finitely generated group containing $H$ from Corollary \ref{cor1}. Since $G_1\cap H=\{ 1\}$ (see Theorem 1.1 (b)), $H$ also embeds in $L=G/G_1$ and the latter group is an extension of $G_2/G_1\in \mathcal E(H)$ by a solvable group $G/G_2$ of derived length $3$. In particular, we have $\c (G_2/G_1)\le \alpha +1 $ and hence $\c (L)\le \alpha +2$. On the other hand, $\c (L)\ge \c(H)\ge \alpha +1$ and $\c(L)$ cannot equal $\alpha +1$ by Lemma \ref{succ} as $L$ is finitely generated. Thus $\c (L)=\alpha +2$. Further we note that $L^\omega $ also splits as an extension of a group from $\mathcal E(H)$ by a solvable group of derived length $3$ and hence $\c (L^\omega)=\alpha +2$.

Further suppose that $n\ge 3$. By induction, there exists a finitely generated group $L_0$ such that $\c (L_0)=\c (L_0^\omega)= \alpha +n-1$. Let $L= L_0 \wre L_0$. Obviously
\begin{equation}\label{cL}
\alpha +n-1= \c (L_0)\le \c(L)\le \alpha +n.
\end{equation}
We want to show that, in fact, $\c(L)=\alpha +n$. Indeed suppose that $\c (L)= \alpha + n-1$. Then $L$ can be obtained from groups of elementary class at most $\alpha +n-2$ by applying  (E) or (U) once.

Suppose first that $L$ splits as $$ 1\to N\to L\to Q\to 1,$$ where $N, Q\in EG_{\alpha +n-2}$. Then $Q$ cannot contain a subgroup isomorphic to $L_0$ and hence the intersection of $N$ with the active copy of the group $L_0$ in $L_0\wre L_0$ is nontrivial. Let $a$ be a non-trivial element from this intersection and let $B$ be the image of the canonical embedding of $L_0$ in the base subgroup of $L_0\wre L_0$. Since $N$ is normal in $L$, it contains the subgroup $$D=[a, B]=\langle aba^{-1}b^{-1} \mid b\in B\rangle .$$ Note that elements $ab_1a^{-1}$ and $b_2$ commute in $L_0\wre L_0$ for any $b_1,b_2\in B$. Hence the map $aba^{-1}b^{-1}\mapsto b$, $b\in B$, extends to a homomorphism $D\to L_0$. Obviously it is surjective. Therefore, by the first part of Lemma \ref{Chou} we have
$$
\c (N)\ge \c (D)\ge \c (L_0)=\alpha +n-1,
$$
which contradicts our assumption.
Similarly if $L$ is a direct union of groups $\{ M_i\} _{i\in I}$ from $EG_{\alpha +n-2}$, then $L=M_i$ for some $i\in I$ as $L$ is finitely generated. Hence $\c (L)\le \alpha +n-2$, which contradicts (\ref{cL}). Thus we get a contradiction in both cases and hence $\c (L)= \alpha + n$.

It remains to show that $\c (L^\omega )=\alpha +n$. However this is obvious since $L^\omega $ splits as an extension of a group isomorphic to $L_0^\omega  $ by a group isomorphic to $L_0^\omega  $, which implies $$\c (L^\omega )\le \c (L_0^\omega ) +1=\alpha +n.$$
\end{proof}

\begin{proof}[Proof of Corollary \ref{EG}]
The corollary follows from Lemma \ref{succ}, Proposition \ref{EGprop}, and the observation about solvable groups made at the beginning of the proof of Proposition \ref{EGprop}.
\end{proof}

\paragraph{Acknowledgments.} We would like to thank V. Mikaelian for explaining to us details of the P. Hall's construction \cite{Hall}, S. Thomas for stimulating discussions and questions, and G. Arzhantseva and Y. de Cornulier for corrections and remarks. We are also grateful to anonymous referees for useful comments and remarks. The research of the first author was supported by the NSF grant DMS-0700811. The second author was supported by the NSF grant DMS-1006345. Both authors were supported by the RFBR grant 11-01-00945.

\vspace{1cm}

\noindent \textbf{Alexander Olshanskii: } Department of Mathematics, Vanderbilt University, Nashville 37240, U.S.A., and Department of Mechanics and Mathematics, Moscow State University, Russia.\\
E-mail: \emph{alexander.olshanskiy@vanderbilt.edu}\\

\smallskip

\noindent \textbf{Denis Osin: } Department of Mathematics, Vanderbilt University, Nashville 37240, U.S.A.\\
E-mail: \emph{denis.osin@gmail.com}

\end{document}